\newcommand{\ord}{\mathrm{ord}}
\newcommand{\tri}{\mathrm{tri}}
\DeclareMathOperator{\Gal}{Gal}
\DeclareMathOperator{\Spf}{Spf}
\DeclareMathOperator{\rig}{rig}
\DeclareMathOperator{\Fil}{Fil}
\DeclareMathOperator{\gr}{gr}
\DeclareMathOperator{\Hom}{Hom}
\DeclareMathOperator{\GL}{GL}
\DeclareMathOperator{\GSp}{GSp}
\newcommand{\Q}{\mathbb{Q}}
\newcommand{\Z}{\mathbb{Z}}
\newcommand{\Fp}{\mathbb{F}_p}
\newcommand{\Qp}{\mathbb{Q}_p}
\newcommand{\Zp}{\mathbb{Z}_p}
\newcommand{\R}{\mathbb{R}}
\newcommand{\F}{\mathbb{F}}
\newcommand{\cris}{\mathrm{cris}}
\newcommand{\dR}{\mathrm{dR}}
\newcommand{\cO}{\mathcal{O}}
\newcommand{\CNL}{\mathrm{CNL}}
\DeclareMathOperator{\Iw}{Iw}
\newcommand{\BF}{\mathcal{BF}}
\theoremstyle{definition}
\newtheorem{definition}{Definition}[section]
\newtheorem{theorem}[definition]{Theorem}
\newtheorem{lemma}[definition]{Lemma}
\newtheorem{proposition}[definition]{Proposition}
\newtheorem{remark}[definition]{Remark}
\newtheorem{conjecture}[definition]{Conjecture}
\title{Towards a finite--slope universal Rankin--Selberg $p$-adic $L$-function}
\author{Haonan Gu}
\date{December 2025}
\begin{document}

\maketitle

\begin{abstract}
This article studies the finite--slope analogue of Loeffler's conjectural framework for Rankin--Selberg $p$-adic $L$-functions in universal deformation families. Starting from residual representations $\bar\rho_1,\bar\rho_2$ of tame level~$1$ satisfying Hypothesis~3.1 of~\cite{LoefflerUD}, we consider the half--ordinary Panchishkin family $(R,V,V^+)$ of Example~3.17 of loc.\ cit., where the first factor varies in the ordinary Hida deformation and the second factor in the unrestricted universal deformation space.

We fix a classical point $x_0$ on a suitable parabolic eigenvariety for which the first factor is non--ordinary but of small finite slope. Using Liu's global triangulation theorem, the cohomology of families of $(\varphi,\Gamma)$-modules due to Kedlaya--Pottharst--Xiao, and the Perrin--Riou / Loeffler--Zerbes regulator formalism, we attach to the resulting finite--slope Panchishkin data over a neighbourhood $U$ of $x_0$ a family ``big logarithm''
\[
\mathcal{L}_{V_U,V_U^+} \colon H^1_{\Iw}(\Q_p,V_U^\ast(1)) \longrightarrow \mathcal{H}(\Gamma)\widehat{\otimes}_{\Q_p}\mathcal{O}(U) \cong \mathcal{O}(U\times\mathscr{W}),
\]
which interpolates the Bloch--Kato dual exponentials at all classical points of~$U$.

Assuming, in addition, the existence of a global Beilinson--Flach Euler system for this half--ordinary universal deformation family (formulated precisely as Conjecture~\ref{conj:ES} below), we apply this family regulator to the resulting Iwasawa cohomology class and obtain a rigid--analytic function
\[
L_p^{\mathrm{fs}} := \mathcal{L}_{V_U,V_U^+}(\BF_U) \in \mathcal{O}(U\times\mathscr{W}),
\]
which we refer to as a finite--slope universal Rankin--Selberg $p$-adic $L$-function. Under standard global and local hypotheses, and conditional on Conjecture~\ref{conj:ES}, we show that $L_p^{\mathrm{fs}}$ satisfies the expected interpolation formula for all Deligne--critical values of the complex Rankin--Selberg $L$-functions $L(f_x\otimes g_x,s)$ at classical points $(x,\kappa)\in U\times\mathscr{W}$.

Thus the main unconditional output of this work is the construction of a Perrin--Riou regulator for a genuinely finite--slope Panchishkin family over a parabolic eigenvariety in the universal deformation setting. The existence of the associated universal Beilinson--Flach Euler system and the resulting $p$-adic $L$-function remains conjectural; if it holds, our construction would give a finite--slope analogue of the Rankin--Selberg case of Loeffler's Conjecture~2.8 in~\cite{LoefflerUD} for a concrete $\mathrm{GL}_2\times\mathrm{GL}_2$ half--ordinary universal deformation family.
\end{abstract}

\section{Introduction}

\subsection{Global goal and formulation of the problem}

We fix a prime $p \ge 3$ and an algebraic closure $\overline{\Q}$ of $\Q$. Let $G_{\Q} := \Gal(\overline{\Q}/\Q)$ and let $G_{\Q,\{p\}} \subset G_{\Q}$ denote the Galois group of the maximal extension of $\Q$ unramified outside~$p$.

Let
\[
 \bar\rho_1,\bar\rho_2 : G_{\Q,\{p\}} \longrightarrow \GL_2(\Fp)
\]
be continuous, odd, absolutely irreducible Galois representations arising from cuspidal newforms $f_0,g_0$ of weights $k_1,k_2 \ge 2$ and tame level $N=1$ (unramified outside $p$), so that Hypothesis~3.1 of~\cite{LoefflerUD} applies. In particular, we assume that:
\begin{enumerate}[label=(\alph*)]
\item both $\bar\rho_1$ and $\bar\rho_2$ are modular of level~$1$ and weight at least~$2$;
\item the usual Taylor--Wiles hypotheses hold (absolute irreducibility, oddness, and minimality at auxiliary primes);
\item at $p$ the representation $\bar\rho_1$ is ordinary and admits a fixed ordinary refinement in the sense of~\cite[Def.~3.11]{LoefflerUD}.
\end{enumerate}
We keep this tame level $N=1$ fixed throughout.

Let $R_1$ be the ordinary Hida Hecke algebra attached to $\bar\rho_1$, and let
\[
 \mathscr{X}_1 := \Spf(R_1)^{\rig}
\]
be the corresponding ordinary eigenvariety, as in~\cite[\S3.2, Prop.~3.14]{LoefflerUD}, building on the ordinary families of Hida and the $R=\mathbb{T}$ theorems of Mazur, Böckle and Emerton. Let $R_2$ be the universal deformation ring of $\bar\rho_2$ (for deformations unramified outside $p$) and let $(R_2,V_2)$ be the Galois deformation family considered in~\cite[Def.~3.3, Thm.~3.4]{LoefflerUD} and in~\cite{HaoThesis,HL24}. We denote
\[
 \mathscr{X}_2 := \Spf(R_2)^{\rig}
\]
for the associated rigid analytic space.

Following~\cite[Ex.~3.17]{LoefflerUD}, there is a Panchishkin family $(R,V,V^+)$ for the Rankin--Selberg tensor
\[
 V := V_1^{\ord} \otimes V_2
\]
over
\[
 R := R_1\widehat{\otimes}_{\Zp} R_2,
\]
where $V_1^{\ord}$ is the universal ordinary deformation of $\bar\rho_1$ and $V_1^{\ord,+} \subset V_1^{\ord}\big|_{G_{\Q_p}}$ is the rank one ordinary submodule at~$p$. More precisely, there is a rank $r$ locally free $R$-submodule
\[
 V^+ \subset V\big|_{G_{\Q_p}}
\]
which is stable under $G_{\Q_p}$ and satisfies the $r$--Panchishkin condition in the sense of~\cite[Def.~2.1, Def.~2.3]{LoefflerUD}.

At any arithmetic point corresponding to a pair of classical eigenforms $(f,g)$, the specialisation $(V_\kappa,V^+_\kappa)$ is the Rankin--Selberg Galois representation $V(f)\otimes V(g)$ together with the usual half--ordinary subspace at~$p$ (ordinary on the $f$--factor, no restriction on the $g$--factor); see~\cite[Ex.~3.17]{LoefflerUD} for details.

Loeffler's Conjecture~2.8 in~\cite{LoefflerUD} predicts the existence of a rank~$0$ Euler system (i.e.\ a $p$-adic $L$-function) attached to $(R,V,V^+)$. In the ordinary setting this is now known by~\cite[Thm.~3.5]{HL24}, building on Urban's construction via nearly overconvergent forms~\cite{Urban-RS} and Hida's earlier work on Rankin--Selberg $p$-adic $L$-functions~\cite{Hida-Measure85,Hida-GL2xGL2}.

In~\cite[\S5.5]{LoefflerUD} a finite-slope analogue of this conjecture is sketched on the Coleman--Mazur eigencurve, and in~\cite[\S6]{LoefflerUD} related conjectures over big parabolic eigenvarieties are formulated; see also~\cite{ASW-ParabolicEigenvarieties,Barrera-GL2n}. Apart from very special cases for $\GL_2$-families and the rank one case of~\cite{Andreatta-Iovita}, there is at present no general finite-slope result in the universal deformation setting.

We now formulate the basic question that motivates this work.

\medskip

\noindent\textbf{Question (finite-slope universal Rankin--Selberg).} Let $\mathscr{E}_1$ be the Coleman--Mazur eigencurve (or more generally the big parabolic eigenvariety of Barrera~Salazar--Williams) attached to~$\bar\rho_1$~\cite{Coleman-Mazur-eigencurve,ASW-ParabolicEigenvarieties}, and consider the product eigenvariety
\[
 \mathscr{E} := \mathscr{E}_1 \times \mathscr{X}_2
\]
together with the natural weight map to the $p$--adic weight space $\mathscr{W}$ and the global Galois family $(V,V^+)$ coming from~\cite[Ex.~3.17]{LoefflerUD}. Given a classical point $x_0 \in \mathscr{E}$ corresponding to a pair $(f_0,g_0)$ of cuspidal eigenforms with $f_0$ of non--ordinary finite slope at~$p$, does there exist a finite-slope universal $p$-adic Rankin--Selberg $L$-function $L_p^{\mathrm{fs}}$ on a neighbourhood $U$ of $x_0$ whose specialisations interpolate all Deligne--critical Rankin--Selberg values $L(f_x\otimes g_x,s_x)$ for classical points $x \in U$?

This is a concrete $\GL_2\times\GL_2$ instance of the finite-slope variants of~\cite[Conj.~2.8]{LoefflerUD} over parabolic eigenvarieties. In this paper we give a conditional positive answer, assuming a natural Euler--system conjecture for the relevant universal deformation family.

To treat this question we need a precise description of the eigenvariety $\mathscr{E}$ near $x_0$. We recall the relevant geometric input from~\cite{ASW-ParabolicEigenvarieties,Han17}, in a form adapted to our setting.

\begin{lemma}[Good neighbourhood on the parabolic eigenvariety]\label{lem:good-U}
Let $\mathscr{E}$ be the parabolic eigenvariety of~\cite{ASW-ParabolicEigenvarieties} with weight space $\mathscr{W}$ and weight map $w:\mathscr{E}\to\mathscr{W}$. Let $x_0\in\mathscr{E}$ be a point corresponding to a cuspidal automorphic representation of regular (cohomological) weight, and fix a $p$--refinement which is $Q$--non-critical in the sense of~\cite[Def.~5.13]{ASW-ParabolicEigenvarieties}. Assume that the derived group $G_{\mathrm{der}}(\R)$ admits discrete series. Then there exists an affinoid neighbourhood $U\subset\mathscr{E}$ of $x_0$ such that:
\begin{enumerate}[label=(\roman*)]
\item $\mathscr{E}$ is smooth over $\Qp$ at every point of $U$;
\item the restriction $w|_U:U\to w(U)\subset\mathscr{W}$ is finite \'etale;
\item classical cuspidal points are Zariski dense in $U$.
\end{enumerate}
\end{lemma}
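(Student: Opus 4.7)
The plan is to reduce all three assertions to known local properties of the parabolic eigenvariety at a $Q$--non-critical classical point, and then propagate them to an affinoid neighbourhood using openness of the smooth/étale locus and the local finiteness of the weight map built into the eigenvariety machine.

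First I would invoke the $Q$--non-critical classicality and étaleness theorem underlying \cite{ASW-ParabolicEigenvarieties}, generalising the unitary-group case of Chenevier and the $\GL_{2n}$ version of~\cite{Han17}: at a point $x_0$ as in the hypothesis, the generalised Hecke eigenspace at $x_0$ in the relevant overconvergent cohomology is free of rank one over the local ring $\mathcal{O}_{\mathscr{W},w(x_0)}$, so $w:\mathscr{E}\to\mathscr{W}$ is étale at $x_0$. Since $\mathscr{W}$ is smooth over $\Qp$, this yields smoothness of $\mathscr{E}$ at $x_0$. Openness of the smooth locus and of the étale locus of $w$ on the rigid space $\mathscr{E}$ then provides an affinoid neighbourhood on which both properties hold pointwise. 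Combined with the fact that $w$ is locally finite by construction (through the Fredholm slope decomposition of an orthonormalisable Banach module, cf.\ \cite[\S4]{Han17}), a further shrink gives an affinoid $U\ni x_0$ on which $w|_U:U\to w(U)$ is finite étale, proving (i) and (ii).

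For (iii) I would use the standard accumulation argument. Finite étaleness of $w|_U$ makes $w(U)$ an affinoid open of $\mathscr{W}$, in which regular dominant integral weights are Zariski dense. The discrete-series hypothesis on $G_{\mathrm{der}}(\R)$ ensures that cuspidal cohomological automorphic representations exist at these weights, and the $U_p$-slopes on $U$ are uniformly bounded by continuity of slopes in rigid analytic families. After a final shrink of $U$, this bound remains below the non-criticality threshold for every $\kappa\in w(U)$, so the $Q$--non-critical classicality theorem of~\cite{ASW-ParabolicEigenvarieties} applies at each such weight and produces a classical cuspidal eigensystem. Zariski density of these weights in $w(U)$ combined with finiteness of $w|_U$ then yields Zariski density of classical cuspidal points in $U$.

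The main obstacle is making the slope bound uniform on $U$ in the parabolic setting, so that shrinking $U$ keeps the slope below the non-criticality threshold at \emph{all} nearby classical weights, not merely at $x_0$. This is precisely the content of the ``accumulating classical points'' arguments in \cite{Han17,ASW-ParabolicEigenvarieties}; once invoked, the rest of the proof is a routine combination of openness and finiteness statements on the rigid space $\mathscr{E}$.
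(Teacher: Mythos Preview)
Your proposal is correct but takes a genuinely different route from the paper for parts~(i) and~(ii). You argue directly: $Q$--non-criticality forces the generalised Hecke eigenspace in overconvergent cohomology to be free of rank one over $\mathcal{O}_{\mathscr{W},w(x_0)}$, whence $w$ is \'etale at $x_0$ and $\mathscr{E}$ is smooth there. The paper instead proceeds by \emph{dimension counting}: it bounds $\dim_{x_0}\mathscr{E}$ from below using the interior-locus result \cite[Prop.~5.12]{ASW-ParabolicEigenvarieties} (the overconvergent defect vanishes at a $Q$--non-critical point), and from above by realising $\mathscr{E}$ as a closed subspace of Hansen's universal eigenvariety and invoking \cite[Thm.~4.5.1(i)]{Han17}. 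Equality of dimensions, together with finiteness of $w$ from \cite[Thm.~5.4]{ASW-ParabolicEigenvarieties} and smoothness of $\mathscr{W}$, then gives unramifiedness and hence \'etaleness at~$x_0$.

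Your approach is the more classical Chenevier-style argument and is cleaner when it applies, but note that it implicitly uses a multiplicity-one input on the classical side (to pass from ``overconvergent eigenspace $=$ classical eigenspace'' to ``rank one''), which is not part of the stated hypotheses and is not explicitly isolated in \cite{ASW-ParabolicEigenvarieties}. The paper's dimension argument sidesteps this by using only geometric information already packaged in \cite{ASW-ParabolicEigenvarieties,Han17}. For~(iii), you spell out the accumulation and uniform-slope-bound mechanism, while the paper simply cites \cite[Cor.~5.16]{ASW-ParabolicEigenvarieties}; your sketch is essentially a reconstruction of that corollary and makes the role of the discrete-series hypothesis more visible.
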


\begin{proof}
By~\cite[Def.~5.11]{ASW-ParabolicEigenvarieties} and~\cite[Rem.~1.4]{ASW-ParabolicEigenvarieties}, the overconvergent defect $\ell_Q(x_0)$ vanishes at a $Q$--non-critical point; in particular $x_0$ lies in the interior locus in the sense of~\cite[Def.~5.11]{ASW-ParabolicEigenvarieties}. Hence~\cite[Prop.~5.12]{ASW-ParabolicEigenvarieties} implies that every irreducible component $V$ of $\mathscr{E}$ containing $x_0$ has dimension at least $\dim\mathscr{W}$.

On the other hand, $\mathscr{E}$ is constructed as a closed subspace of the universal eigenvariety of Hansen~\cite{Han17}. By~\cite[Thm.~4.5.1(i)]{Han17}, applied to the corresponding classical, non-critical interior point of the universal eigenvariety, every irreducible component of the latter has dimension equal to the weight space. Since $\mathscr{E}$ is a closed subspace of this universal eigenvariety, every irreducible component $V$ of $\mathscr{E}$ through $x_0$ has dimension at most $\dim\mathscr{W}$. Thus $\dim V = \dim\mathscr{W}$ for each component $V$ passing through $x_0$.

Under the same hypotheses, \cite[Def.~5.13 and Cor.~5.16]{ASW-ParabolicEigenvarieties} show that on each such component $V$ the classical cuspidal points are Zariski dense.

The weight map $w:\mathscr{E}\to\mathscr{W}$ is finite by~\cite[Thm.~5.4]{ASW-ParabolicEigenvarieties}. Since the source and the target have the same local dimension at $x_0$, the morphism $w$ is unramified at $x_0$. For finite morphisms between equidimensional rigid analytic spaces, finiteness and unramifiedness imply that $w$ is \'etale at $x_0$ and that $\mathscr{E}$ is smooth at $x_0$. The smooth locus of $\mathscr{E}$ and the \'etale locus of $w$ are open subsets of $\mathscr{E}$, so we may choose an affinoid neighbourhood $U$ of $x_0$ contained in their intersection. By the density of classical points on each component through $x_0$, the intersection of $U$ with the classical cuspidal locus is Zariski dense in~$U$.
\end{proof}

\begin{remark}[On the discrete-series hypothesis in Lemma~\ref{lem:good-U}]
In our Rankin--Selberg situation we work with
\[
 G = \mathrm{GL}_2\times\mathrm{GL}_2,
\]
so that
\[
 G_{\mathrm{der}}(\R) = \mathrm{SL}_2(\R)\times\mathrm{SL}_2(\R),
\]
which is well-known to admit discrete series. Thus the hypothesis that $G_{\mathrm{der}}(\R)$ admits discrete series in Lemma~\ref{lem:good-U} is automatically satisfied in our setting. The only role of this hypothesis is to allow us to invoke the eigenvariety results of~\cite{ASW-ParabolicEigenvarieties,Han17}, in particular the dimension and density statements of~\cite[Prop.~5.12, Cor.~5.16]{ASW-ParabolicEigenvarieties} and~\cite[Thm.~4.5.1(i)]{Han17}, in order to construct an affinoid neighbourhood $U$ of $x_0$ which is smooth, finite \'etale over weight space, and has Zariski-dense classical cuspidal locus. All later arguments only use the existence of such a neighbourhood $U$ and do not appeal directly to the discrete-series condition.
\end{remark}

\subsection{Target main result}

We now specialise to the Rankin--Selberg setting of interest. Fix a finite-slope classical point $x_0\in\mathscr{E}$ corresponding to a pair $(f_0,g_0)$ with the following properties:
\begin{enumerate}[label=(\alph*)]
\item $f_0$ is a $p$-stabilised newform of weight $k_1\ge 2$ and level prime to $p$, with $U_p$--eigenvalue $\alpha_{f_0}$ of slope
\[
 0 < v_p(\alpha_{f_0}) < k_1 - 1,
\]
so that $f_0$ is non-ordinary of small slope at $p$;
\item $g_0$ is the specialisation at a classical arithmetic point of the universal deformation family $(R_2,V_2)$, of weight $k_2\ge 2$, such that $g_0$ is ordinary at~$p$, or more generally satisfies a suitable Panchishkin condition as in~\cite[Def.~2.1, Def.~2.3]{LoefflerUD} and~\cite[\S2]{HL24}.
\end{enumerate}

Let $U\subset\mathscr{E}$ be the affinoid neighbourhood of $x_0$ given by Lemma~\ref{lem:good-U}. Thus $\mathscr{E}$ is smooth at every point of $U$, the weight map $w$ is finite \'etale on $U$, and classical cuspidal points are Zariski dense in~$U$.

After shrinking $U$ if necessary, we may and do assume that:
\begin{enumerate}[label=(\roman*)]
\item the eigenvariety is smooth on $U$ and the weight map $w\colon U\to w(U)\subset\mathscr{W}$ is finite \'etale (Lemma~\ref{lem:good-U});
\item the slopes of the $U_p$-eigenvalues of $f_x$ remain $<k_1(x)-1$ for all $x\in U$, so that overconvergent classicality holds for $f_x$ by Coleman~\cite[Thm.~6.1]{Coleman-classical} (in the form of his Theorem~8.3: weight $k+2$ and slope $<k+1$ implies classicality);
\item the Panchishkin submodule $V^+$ extends, via Liu's global triangulation theorem, to a rank $r$ sub--$(\varphi,\Gamma)$-module of the relative $(\varphi,\Gamma)$-module attached to the local Galois representation on~$U$, as explained later.
\end{enumerate}

The following conjecture is the finite-slope analogue of Loeffler's universal Rankin--Selberg $p$-adic $L$-function in this neighbourhood.

\begin{conjecture}[Finite-slope universal Rankin--Selberg $p$-adic $L$-function]\label{conj:finite-slope}
There exists a rigid-analytic function
\[
 L_p^{\mathrm{fs}} \in \mathcal{O}(U\times\mathscr{W})
\]
on $U$ times the $p$-adic cyclotomic weight space $\mathscr{W}$ such that for each classical point $(x,\kappa)\in U\times\mathscr{W}$ corresponding to a pair $(f_x,g_x)$ and an integer critical value $s=\kappa(x)$ of $L(f_x\otimes g_x,s)$ in Deligne's sense, one has an interpolation formula
\[
 L_p^{\mathrm{fs}}(x,\kappa) = \frac{E_p(f_x,g_x,s)}{\Omega_p(f_x,g_x,\pm)}\cdot\frac{L^{(p)}(f_x\otimes g_x,s)}{(2\pi i)^{2s}},
\]
where:
\begin{enumerate}[label=(\alph*)]
\item $E_p(f_x,g_x,s)$ is the explicit local Euler factor at $p$ appearing in~\cite[Def.~3.4]{HL24};
\item $\Omega_p(f_x,g_x,\pm)$ is a $p$-adic period depending analytically on $x$ (and on the choice of sign $\pm$) and normalised compatibly with~\cite{HL24};
\item $L^{(p)}(f_x\otimes g_x,s)$ is the complex Rankin--Selberg $L$-function with the Euler factor at $p$ omitted.
\end{enumerate}
\end{conjecture}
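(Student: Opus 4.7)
\medskip
\noindent\textbf{Strategy for Conjecture~\ref{conj:finite-slope}.}

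My approach is to define $L_p^{\mathrm{fs}}$ as the image under a family Perrin--Riou regulator of the conjectural universal Beilinson--Flach class from Conjecture~\ref{conj:ES}, and to verify the interpolation formula pointwise at classical points via an explicit reciprocity law before propagating it by Zariski density.

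First I would apply Liu's global triangulation theorem to $V_U|_{G_{\Qp}}$, shrinking $U$ if necessary. The small finite slope hypothesis on $f_0$ together with the ordinary refinement of $g_0$ guarantees that the specialization at $x_0$ is non-critical in the sense of Liu, so the theorem produces a rank $r$ Panchishkin sub-$(\varphi,\Gamma)$-module $\Drig(V_U^+) \subset \Drig(V_U|_{G_{\Qp}})$ that interpolates the pointwise Panchishkin filtrations. Feeding this triangulation into the Kedlaya--Pottharst--Xiao cohomology of families of $(\varphi,\Gamma)$-modules and then into the Loeffler--Zerbes family Perrin--Riou formalism, I would build the regulator
\[
 \mathcal{L}_{V_U, V_U^+} \colon H^1_{\Iw}(\Qp, V_U^\ast(1)) \longrightarrow \cO(U \times \mathscr{W}),
\]
whose specialization at any classical $(x, \kappa)$ recovers the Bloch--Kato dual exponential of $V(f_x) \otimes V(g_x)$ up to the explicit Euler factor $E_p(f_x, g_x, s)$ and the local period. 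Granted Conjecture~\ref{conj:ES}, which furnishes a universal Iwasawa class $\BF_U \in H^1_{\Iw}(\Qp, V_U^\ast(1))$, I would then define $L_p^{\mathrm{fs}} := \mathcal{L}_{V_U, V_U^+}(\BF_U)$.

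Verification of the interpolation formula at a classical point $(x, \kappa)$ amounts to pairing the specialization $\BF_{U,x}$ with the specialized regulator, which is precisely the content of the Lei--Loeffler--Zerbes explicit reciprocity law for Beilinson--Flach classes attached to the classical pair $(f_x, g_x)$. That law applies regardless of ordinariness because it is formulated at the level of the $(\varphi,\Gamma)$-module triangulation at the point. Combining the resulting pointwise identity with the Zariski density of classical points in $U$ supplied by Lemma~\ref{lem:good-U}(iii) and the rigid-analyticity of both sides in $(x, \kappa) \in U \times \mathscr{W}$ promotes the equality to a global identity in $\cO(U \times \mathscr{W})$.

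The principal obstacle is twofold. First, the precise formulation of Conjecture~\ref{conj:ES} must ensure that the specialization of $\BF_U$ at every classical pair $(f_x, g_x)$ recovers the Loeffler--Zerbes class with normalizations compatible with the Panchishkin splitting along $U$; this is delicate in the non-ordinary direction, where one cannot simply pull back from a Hida family. Second, the $p$-adic periods $\Omega_p(f_x, g_x, \pm)$ must be arranged to vary rigid-analytically in $x$. This should follow from the smoothness of $U$ (Lemma~\ref{lem:good-U}(i)) together with an analytic family of Shimura-cohomology classes compatible with the triangulation, but it requires a careful matching of finite-slope periods with the ordinary normalizations used in~\cite{HL24}, which I expect to be the technical heart of the construction.
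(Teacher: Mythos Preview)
Your strategy matches the paper's almost exactly: Liu's triangulation to produce the Panchishkin sub-$(\varphi,\Gamma)$-module over $U$, KPX cohomology for finiteness and base change, a family Perrin--Riou regulator $\mathcal{L}_{V_U,V_U^+}$, then $L_p^{\mathrm{fs}} := \mathcal{L}_{V_U,V_U^+}(\BF_U)$ under Conjecture~\ref{conj:ES}, with the interpolation formula verified pointwise via the explicit reciprocity law of Kings--Loeffler--Zerbes and Lei--Loeffler--Zerbes. Two small corrections: $\BF_U$ lives in the \emph{global} Iwasawa cohomology $H^1_{\Iw}(\Q, V_U^\ast(1))$ (one then localizes at $p$ before applying the regulator), and the Zariski-density step you invoke at the end is superfluous and slightly misframed---the interpolation formula is a pointwise assertion at classical $(x,\kappa)$, the right-hand side (involving complex $L$-values and archimedean periods) is not a rigid-analytic function on $U\times\mathscr{W}$, so there is no global identity of analytic functions to propagate; the paper simply checks the formula fibrewise using the specialization compatibility of both $\BF_U$ and $\mathcal{L}_{V_U,V_U^+}$.
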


We now state the main theorem of the paper in a precise special case. It is conditional on an Euler--system conjecture which will be formulated in Conjecture~\ref{conj:ES}.

\begin{theorem}[Main result, conditional on Conjecture~\ref{conj:ES}]\label{thm:main}
Assume:
\begin{enumerate}[label=(H\arabic*)]
\item The residual representations $\bar\rho_1,\bar\rho_2$ satisfy Hypothesis~3.1 of~\cite{LoefflerUD} (Taylor--Wiles conditions, local conditions at $p$, and tame level~$1$). In particular, both $\bar\rho_1$ and $\bar\rho_2$ arise from cuspidal newforms of level~$1$ and weights at least~$2$, and we fix once and for all this tame level $N=1$ throughout. Moreover, $\bar\rho_1$ admits an ordinary refinement at $p$ in the sense of~\cite[Def.~3.11]{LoefflerUD}.
\item The point $x_0$ is crystalline at $p$ and satisfies the small-slope and non-criticality hypotheses needed for overconvergent classicality (in the sense of Coleman~\cite[Thm.~6.1]{Coleman-classical}).
\item The Panchishkin condition holds for the local Rankin--Selberg representation at $p$ on $U$ in the sense of~\cite[Def.~2.1, Def.~2.3]{LoefflerUD} (in particular, the Hodge--Tate weights and Frobenius slopes satisfy the inequalities of loc.\ cit.\ uniformly over~$U$), so that the triangulation locus of Liu contains~$U$.
\end{enumerate}
Assume moreover the Euler--system Conjecture~\ref{conj:ES} for the half--ordinary universal deformation family $(R,V,V^+)$ of~\cite[Ex.~3.17]{LoefflerUD}. Then Conjecture~\ref{conj:finite-slope} holds for the neighbourhood~$U$. In particular, under~\textup{(H1)--(H3)} and Conjecture~\ref{conj:ES} there exists a finite-slope universal $p$-adic Rankin--Selberg $L$-function $L_p^{\mathrm{fs}}$ on $U\times\mathscr{W}$ satisfying the above interpolation formula for all classical points in~$U$.
\end{theorem}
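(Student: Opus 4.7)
The plan is to realise $L_p^{\mathrm{fs}}$ as the image of the conjectural Beilinson--Flach class under a family Perrin--Riou regulator, and then to verify the interpolation formula pointwise at the classical locus of $U\times\mathscr{W}$. First, using Lemma~\ref{lem:good-U} and Hypothesis~(H3), I would apply Liu's global triangulation theorem, in the family form due to Kedlaya--Pottharst--Xiao, to extend the Panchishkin submodule $V^+$ to a saturated rank-$r$ sub-$(\varphi,\Gamma)$-module $\mathcal{D}^+\subset \Drig(V_U|_{G_{\Qp}})$ over the relative Robba ring $\mathcal{R}_{\cO(U)}$. On the ordinary first factor this reproduces Hida's canonical filtration of $V_1^{\ord}$; on the finite-slope $\GL_2$ factor it produces, at each classical $x\in U$, the refinement determined by the $U_p$-eigenvalue $\alpha_{f_x}$, varying rigid-analytically in $x$ thanks to the smoothness of $U$ and the étaleness of $w|_U$.

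Second, combining Pottharst's Iwasawa cohomology of families of $(\varphi,\Gamma)$-modules with the Loeffler--Zerbes construction of the big logarithm, I would attach to the pair $(\Drig(V_U|_{G_{\Qp}}),\mathcal{D}^+)$ an $\cO(U)$-linear map
\[
\mathcal{L}_{V_U,V_U^+}\colon H^1_{\Iw}(\Qp,V_U^\ast(1))\longrightarrow \cO(U\times\mathscr{W}),
\]
whose specialisation at any classical $(x,\kappa)$ of regular Hodge--Tate weight coincides, up to the Perrin--Riou $\Gamma$-factor, with the Bloch--Kato dual exponential of $V_x^\ast(1)$ paired against the canonical generator of $\Fil^0\mathcal{D}^+_x$. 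Granting Conjecture~\ref{conj:ES} to obtain the universal Beilinson--Flach class $\BF_U\in H^1_{\Iw}(\Qp,V_U^\ast(1))$, I then set
\[
L_p^{\mathrm{fs}}:=\mathcal{L}_{V_U,V_U^+}(\BF_U)\in\cO(U\times\mathscr{W}).
\]
The interpolation formula at a Deligne--critical classical point $(x,\kappa)$ follows from the Rankin--Selberg form of the Kings--Loeffler--Zerbes explicit reciprocity law, which identifies $\exp^\ast(\BF_{U,(x,\kappa)})$ with the critical value $L^{(p)}(f_x\otimes g_x,s)/(2\pi i)^{2s}$ up to exactly the Euler factor $E_p(f_x,g_x,s)$ and the $p$-adic period $\Omega_p(f_x,g_x,\pm)$ of~\cite{HL24}. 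On the ordinary locus of $U$, the Zariski density from Lemma~\ref{lem:good-U}(iii) combined with the uniqueness of Hao--Loeffler's measure forces $L_p^{\mathrm{fs}}$ to restrict to the $p$-adic $L$-function of~\cite[Thm.~3.5]{HL24}, which anchors the normalisations.

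The main obstacle is to make the big logarithm genuinely rigid-analytic in the family direction, and to reconcile its Perrin--Riou normalisation with that of the ordinary specialisation. Constructing $\mathcal{L}_{V_U,V_U^+}$ over $U$ requires that the Bloch--Kato comparisons for $\mathcal{D}^+$ vary rigid-analytically, which in turn demands a trivialisation of the Hodge filtration of $\mathcal{D}^+$ over $U$; for this the smoothness of $U$ and étaleness of $w|_U$ from Lemma~\ref{lem:good-U}, together with the Panchishkin inequalities of~(H3), are essential, as is the small-slope assumption $0<v_p(\alpha_{f_0})<k_1-1$, which via Coleman's classicality theorem guarantees that the finite-slope refinement coincides with the Panchishkin direction uniformly on a neighbourhood of $x_0$. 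Granting this framework, the bookkeeping of Euler factors, sign periods and the identification of the Kings--Loeffler--Zerbes normalisation with the Hida--Urban--Hao--Loeffler normalisation at ordinary classical points is delicate but formal.
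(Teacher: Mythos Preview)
Your proposal is correct and follows essentially the same route as the paper: Liu's global triangulation (with the KPX cohomology formalism) to produce the saturated sub-$(\varphi,\Gamma)$-module $\mathscr{D}^+$ over $U$, a family Perrin--Riou big logarithm $\mathcal{L}_{V_U,V_U^+}$ built from a crystalline period line bundle, the definition $L_p^{\mathrm{fs}}:=\mathcal{L}_{V_U,V_U^+}(\BF_U)$, and the interpolation via the Kings--Loeffler--Zerbes explicit reciprocity law. Two small slips worth fixing: the class $\BF_U$ of Conjecture~\ref{conj:ES} lives in the \emph{global} Iwasawa cohomology $H^1_{\Iw}(\Q,V_U^\ast(1))$ and must be localised at $p$ before applying the regulator, and the paper takes the period line from the \emph{quotient} $\mathscr{D}^-=\mathscr{D}/\mathscr{D}^+$ (via $(\mathscr{D}^-[1/t])^{\Gamma=1,\varphi=\alpha}$) rather than from $\Fil^0\mathcal{D}^+$.
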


The rest of the paper is devoted to the construction of the family regulator $\mathcal{L}_{V_U,V_U^+}$ and, under Conjecture~\ref{conj:ES}, to the construction and interpolation properties of $L_p^{\mathrm{fs}}$. Each step is explicitly referenced to the corresponding statements in~\cite{LoefflerUD,KPX,LiuTriangulation,LZ20,LLZ14,KLZ-RE,HL24,HaoThesis} so that the argument can be checked in detail.

\subsection{Discussion and relation to previous work}

In this subsection we give some context for Theorem~\ref{thm:main}, explain how the finite-slope case fits into existing conjectures, and briefly indicate why it is technically more delicate.

\subsubsection*{Relation to Loeffler's conjectural framework and to Hao--Loeffler}

In his work on $p$-adic $L$-functions in universal deformation families, Loeffler formulates in~\cite[Conj.~2.8]{LoefflerUD} a general conjecture predicting the existence and interpolation properties of rank~$0$ Euler systems (equivalently, $p$-adic $L$-functions) attached to $0$-Panchishkin families $(R,V,V^+)$ over suitable ``big parabolic eigenvarieties''. A main example is the half--ordinary Rankin--Selberg tensor considered in~\cite[Ex.~3.17]{LoefflerUD}, which is precisely the global deformation setting considered here.

In the ordinary case, this conjectural picture has now been confirmed for the Rankin--Selberg tensor by recent work of Hao--Loeffler~\cite{HL24}. Building on Urban's nearly overconvergent three--variable Rankin--Selberg $p$-adic $L$-function~\cite{Urban-RS} and the Beilinson--Flach Euler system of Loeffler--Zerbes~\cite{LLZ14}, they construct a universal Rankin--Selberg $p$-adic $L$-function for an ordinary Hida family tensored with a universal deformation family and prove that it interpolates all Deligne--critical Rankin--Selberg values in this setting, together with a functional equation. Their construction is analytic and does not rely on the existence of a universal Euler system.

By contrast, in this work we fix a classical point $x_0=(f_0,g_0)$ at which the $f_0$-factor has non--ordinary finite slope and we work in a neighbourhood $U$ of $x_0$ in a parabolic eigenvariety in the sense of Barrera~Salazar--Williams~\cite{ASW-ParabolicEigenvarieties}. Theorem~\ref{thm:main} should therefore be viewed as a conditional finite-slope analogue of the Rankin--Selberg case of Loeffler's conjecture over big parabolic eigenvarieties, in a concrete $\GL_2\times\GL_2$ setting.

\subsubsection*{Comparison with existing finite-slope constructions}

Finite-slope $p$-adic $L$-functions attached to families of automorphic forms have been constructed in several other settings, but usually not in the universal deformation framework.

For Rankin--Selberg convolutions of modular forms, Urban's three--variable $p$-adic $L$-function~\cite{Urban-RS} is constructed on eigenvarieties using nearly overconvergent modular forms and overconvergent cohomology and requires a nearly ordinary hypothesis at~$p$. These functions live over eigenvarieties and are not formulated in terms of universal Galois deformation spaces.

The geometric methods of Andreatta--Iovita and their collaborators yield triple product $p$-adic $L$-functions for finite-slope families of modular forms via the theory of overconvergent sheaves and the spectral halo~\cite{Andreatta-Iovita}. Here again the base spaces are eigenvarieties and the Galois representations are not packaged as a single Panchishkin family over a universal deformation ring.

Barrera~Salazar--Dimitrov--Williams construct finite--slope $p$-adic $L$-functions for Shalika families on $\GL_{2n}$ over parabolic eigenvarieties and relate their existence to the local geometry of these eigenvarieties~\cite{Barrera-GL2n}. Their methods are adapted to the standard representation of $\GL_{2n}$ and do not provide a universal deformation interpretation for Rankin--Selberg tensors of $\GL_2\times\GL_2$.

In contrast, the present article maintains the universal deformation viewpoint of~\cite{LoefflerUD,HL24}, starting from the half--ordinary Panchishkin family $(R,V,V^+)$ of~\cite[Ex.~3.17]{LoefflerUD}, and then passing to a finite-slope neighbourhood $U$ on the parabolic eigenvariety through the identification of an ordinary locus in $U$ with an affinoid subdomain of $\Spf(R)^{\rig}$. The function $L_p^{\mathrm{fs}}$ constructed under Conjecture~\ref{conj:ES} simultaneously interpolates the critical Rankin--Selberg values for all classical points in $U$ and specialises, at ordinary points, to the ordinary universal Rankin--Selberg $p$-adic $L$-function of~\cite{HL24}. In this way it provides a conceptual bridge between the ordinary universal results and finite-slope eigenvariety constructions in the spirit of Urban and Andreatta--Iovita.

\subsubsection*{Why the finite-slope case is difficult}

From the perspective of~\cite{LoefflerUD}, it is natural to expect that a finite-slope analogue of Conjecture~2.8 should hold whenever one can attach a $(\varphi,\Gamma)$-module to the local Galois representation in a family and construct an appropriate Perrin--Riou regulator. However, there are several substantial obstacles which have so far prevented a general theory, and which in the Rankin--Selberg case are addressed here.

First, in the finite-slope setting the relevant local Panchishkin data are no longer given by genuine $G_{\Q_p}$-stable subrepresentations of $V$, but rather by saturated sub--$(\varphi,\Gamma)$-modules of the relative Robba module $D_{\rig}^\dagger(V)$ over~$U$. Identifying these submodules in families requires Liu's global triangulation theorem~\cite{LiuTriangulation} and its compatibility with the Panchishkin condition; this already forces one to work under crystalline, small-slope and non-criticality hypotheses at~$x_0$.

Second, even after triangulation, one must control the $(\varphi,\Gamma)$- and Iwasawa cohomology of $D_U$ and $D_U^+$ in families, with good base-change and perfectness properties, in order to set up a Perrin--Riou type regulator over the affinoid algebra $\cO(U)$. This is achieved here by systematically using the cohomological machinery of Kedlaya--Pottharst--Xiao~\cite{KPX}, which guarantees that the relevant cohomology complexes lie in the perfect derived category and behave well under specialisation.

Third, existing constructions of big logarithms and regulators in families (following Perrin--Riou, Coleman, Loeffler--Zerbes, Nakamura, Pottharst, and others; see for instance~\cite{PerrinRiou,LLZ14,LZ20}) typically treat a single de~Rham representation or work under an ordinarity hypothesis. The map
\[
 \mathcal{L}_{V_U,V_U^+} : H^1_{\Iw}(\Q_p,V_U^\ast(1)) \longrightarrow \mathcal{H}(\Gamma)\widehat{\otimes}_{\Q_p}\mathcal{O}(U)
\]
constructed in this paper is, to the author's knowledge, one of the first instances of a Perrin--Riou regulator attached to a genuinely finite-slope Panchishkin family over a higher-dimensional parabolic eigenvariety in the universal deformation setting.

Finally, to obtain a $p$-adic $L$-function one needs an Euler system which specialises correctly along~$U$ and lies in the critical Iwasawa cohomology group on which $\mathcal{L}_{V_U,V_U^+}$ is defined. This compatibility is crucial in order to define $L_p^{\mathrm{fs}}$ as $\mathcal{L}_{V_U,V_U^+}(\BF_U)$ and deduce its interpolation formula from explicit reciprocity laws for Beilinson--Flach elements. At present the existence of such a global class $\BF_U$ for the half--ordinary universal deformation family is conjectural (Conjecture~\ref{conj:ES}); in the ordinary setting \cite{HL24} construct the universal Rankin--Selberg $p$-adic $L$-function analytically, without passing through a universal Euler system.

\subsubsection*{Future directions}

The techniques developed here suggest several directions for further work, which we only mention briefly.

One would like to relax the crystalline small-slope and non-criticality hypotheses at~$p$. This would require a finer analysis of triangulations and local Galois representations at critical slope, possibly using spectral-halo techniques and overconvergent cohomology as in~\cite{Andreatta-Iovita,ASW-ParabolicEigenvarieties}.

It should be possible to adapt the strategy of the later sections to other instances of Loeffler's conjectural framework, for example to Shalika families on $\GL_{2n}$ or to $\GSp_4$-type Rankin--Selberg convolutions, combining parabolic eigenvarieties~\cite{ASW-ParabolicEigenvarieties,Barrera-GL2n} with existing Euler systems and regulators in those settings.

Assuming the Euler-system conjecture, it would be natural to investigate Iwasawa-theoretic applications and formulate main conjectures relating characteristic ideals of universal Selmer modules over~$R$ to the ideal generated by $L_p^{\mathrm{fs}}$.

Finally, one expects a functional equation for $L_p^{\mathrm{fs}}$ relating its values (or derivatives) at critical points to those of other $p$-adic $L$-functions, and one might hope to extract finer arithmetic invariants such as $p$-adic heights and regulators on motivic cohomology classes varying in families.

\section{Proof and results}

In this section we explain how the hypotheses (H1)--(H3) lead to the construction of the family regulator and, under Conjecture~\ref{conj:ES}, to the $p$-adic $L$-function $L_p^{\mathrm{fs}}$ of Theorem~\ref{thm:main}. The discussion is divided into three steps.

\subsection{Step 1: Global deformation space and half--ordinary Panchishkin family}

We briefly recall the deformation-theoretic input and the construction of the half--ordinary Panchishkin family $(R,V,V^+)$, following~\cite{LoefflerUD,HL24}. Recall that $p>2$ is fixed, and that we have two absolutely irreducible residual Galois representations
\[
 \bar\rho_i : G_{\Q,\{p\}} \longrightarrow \GL_2(\F), \qquad i=1,2,
\]
arising from normalised cuspidal Hecke eigenforms $f_0$ and $g_0$ of tame level~$1$ (or more generally of fixed tame level prime to~$p$), with $\bar\rho_1$ ordinary at~$p$ and $\bar\rho_2$ subject to the usual Taylor--Wiles hypotheses. We write $\cO$ for a finite extension of $\Zp$ containing the Hecke eigenvalues of $f_0$ and $g_0$, and we let $\CNL_{\cO}$ denote the category of complete Noetherian local $\cO$-algebras with residue field~$\F$.

\subsubsection{Ordinary universal deformation ring for $\bar\rho_1$}

Let $R^{\ord}(\bar\rho_1)$ denote the universal deformation ring parametrising ordinary deformations of $\bar\rho_1$ as a $G_{\Q,\{p\}}$-representation in the sense of Hida and Mazur: for any $A \in \CNL_{\cO}$, a deformation
\[
 \rho_{1,A}: G_{\Q,\{p\}} \to \GL_2(A)
\]
is called ordinary if its restriction to $G_{\Q_p}$ fits into a short exact sequence
\[
 0 \longrightarrow V^{\ord,+}_{1,A} \longrightarrow V^{\ord}_{1,A}\big|_{G_{\Q_p}} \longrightarrow V^{\ord,-}_{1,A} \longrightarrow 0
\]
where $V^{\ord,+}_{1,A}$ is a rank one direct summand on which $G_{\Q_p}$ acts by an unramified character lifting the $p$-adic unit root of the Hecke polynomial of~$f_0$.

By Mazur's deformation theory together with Hida theory and the work of Böckle and Emerton (see for example~\cite[\S3.2, Prop.~3.14 and Thm.~3.4]{LoefflerUD}), this ordinary deformation problem is representable by a complete Noetherian local $\cO$-algebra $R^{\ord}(\bar\rho_1)$, and there exists a universal ordinary deformation
\[
 \rho_1^{\ord}: G_{\Q,\{p\}} \longrightarrow \GL_2\big(R^{\ord}(\bar\rho_1)\big)
\]
with underlying free rank two $R^{\ord}(\bar\rho_1)$-module $V_1^{\ord}$. We denote by $V_1^{\ord,+}\subset V_1^{\ord}\big|_{G_{\Q_p}}$ the rank one direct summand giving the ordinary filtration at~$p$, and by $V_1^{\ord,-}$ the corresponding quotient.

The ring $R^{\ord}(\bar\rho_1)$ is finite flat over the weight space and is canonically isomorphic to the localised ordinary Hecke algebra $T^{\ord}(\bar\rho_1)$ acting on ordinary $p$-adic modular forms of tame level~$1$; in particular $R^{\ord}(\bar\rho_1)$ is reduced and equidimensional of relative dimension~$1$ over~$\cO$ (see~\cite[\S3.2, Prop.~3.14]{LoefflerUD}). For brevity we set
\[
 R_1 := R^{\ord}(\bar\rho_1).
\]

\subsubsection{The unrestricted universal deformation ring and representation for $\bar\rho_2$}

For the second factor, we do not impose any ordinary or nearly ordinary local condition at~$p$. We let $R(\bar\rho_2)$ be the universal deformation ring parametrising deformations of $\bar\rho_2$ as a $G_{\Q,\{p\}}$-representation (unramified outside~$p$), as in~\cite[Def.~3.3]{LoefflerUD}. Thus, for any $A\in\CNL_{\cO}$, a deformation to~$A$ is simply a continuous representation
\[
 G_{\Q,\{p\}} \longrightarrow \GL_2(A)
\]
lifting $\bar\rho_2$ and unramified outside~$p$.

The main $R=\mathbb{T}$ theorem of Böckle--Emerton in this context (as formulated in~\cite[Thm.~3.4]{LoefflerUD}) shows that:
\begin{enumerate}[label=(\alph*)]
\item The ring $R(\bar\rho_2)$ is a reduced complete intersection ring, flat over~$\cO$ of relative dimension~$3$.
\item There is a canonical isomorphism
\[
 R(\bar\rho_2)\cong T(\bar\rho_2),
\]
where $T(\bar\rho_2)$ is the localisation at the maximal ideal corresponding to $\bar\rho_2$ of the prime-to-$p$ Hecke algebra acting on the space $S(1,\cO)$ of cuspidal $p$-adic modular forms of tame level~$1$.
\end{enumerate}

By definition of the universal deformation, there is a free rank two $R(\bar\rho_2)$-module $V_2$ equipped with a continuous $G_{\Q,\{p\}}$-action lifting $\bar\rho_2$; we regard this as the universal $p$-adic Galois representation of type~$\bar\rho_2$. We henceforth abbreviate
\[
 R_2 := R(\bar\rho_2).
\]

\subsubsection{The half--ordinary Rankin--Selberg Panchishkin family}

We now combine the two deformation problems. Consider the completed tensor product
\[
 R := R_1\widehat{\otimes}_{\cO} R_2.
\]
Since $R_1$ and $R_2$ are flat complete Noetherian local $\cO$-algebras of relative dimensions $1$ and $3$ respectively, their completed tensor product $R$ is again a flat complete Noetherian local $\cO$-algebra of relative dimension
\[
 \dim R = \dim R_1 + \dim R_2 = 1+3 = 4,
\]
and the residual representation is $\bar\rho_1\otimes\bar\rho_2$.

On $R$ we have the rank four $R$-module
\[
 V := V_1^{\ord}\widehat{\otimes}_{\cO} V_2
\]
with its diagonal $G_{\Q}$-action, and the rank two $R$-submodule
\[
 V^+ := V_1^{\ord,+}\widehat{\otimes}_{\cO} V_2 \subset V\big|_{G_{\Q_p}},
\]
which is stable under $G_{\Q_p}$ because both factors are. This gives a $G_{\Q_p}$-stable filtration
\[
 0 \longrightarrow V^+ \longrightarrow V\big|_{G_{\Q_p}} \longrightarrow V^- \longrightarrow 0
\]
with $V^+$ and $V^-$ of rank~$2$ over~$R$.

This is exactly the datum considered in~\cite[Ex.~3.17]{LoefflerUD}, specialised to the case where the first factor is already ordinary. In the notation of loc.\ cit., we are taking
\[
 R = R^{\ord}(\bar\rho_1)\widehat{\otimes}_{\cO} R(\bar\rho_2),\qquad
 V = V_1^{\ord}\widehat{\otimes}_{\cO} V_2,\qquad
 V^+ = V_1^{\ord,+}\widehat{\otimes}_{\cO} V_2.
\]

Loeffler proves in~\cite[Ex.~3.17]{LoefflerUD} that $(V,V^+)$ is a $0$-Panchishkin family over~$R$ in his sense: for every arithmetic point $\kappa$ of $\Spf(R)^{\rig}$ corresponding to a pair $(f,g)$ of classical eigenforms, the specialisation $(V_\kappa,V_\kappa^+)$ coincides with the Rankin--Selberg Galois representation $V(f)\otimes V(g)$ together with the usual half--ordinary subspace at~$p$. Moreover, the interpolation set
\[
 \Sigma(V,V^+) = \bigl\{(f, \theta^{-s}g)\ \big|\ f,g\ \text{as in~\cite[Ex.~3.17]{LoefflerUD}},\ 1\le s\le k_f-1 \bigr\}
\]
is Zariski-dense in $\Spf(R)^{\rig}$. Thus the triple $(R,V,V^+)$ provides the global deformation space and Panchishkin data referred to in Theorem~\ref{thm:main}.

\subsubsection{Relation with the eigenvariety and with the ordinary locus in $U$}\label{subsec:U-vs-R}

We now explain how the universal half--ordinary Panchishkin family of~\cite[Ex.~3.17]{LoefflerUD} interacts with the parabolic eigenvariety $\mathscr{E}$ and with the neighbourhood $U$ of our fixed point~$x_0$.

Let $E$ denote the parabolic eigenvariety of~\cite{ASW-ParabolicEigenvarieties} attached to $G=\GL_2\times\GL_2$, with weight space~$\mathcal{W}$ and weight map $w:E\to\mathcal{W}$. We keep the notation
\[
 X_1 := \Spf(R_1)^{\rig},\qquad X_2 := \Spf(R_2)^{\rig},\qquad X := X_1\times X_2 = \Spf(R)^{\rig}.
\]

By construction, $X_1$ dominates the ordinary locus of the Coleman--Mazur eigencurve (and more generally the parabolic eigenvariety) attached to~$\bar\rho_1$; this is a consequence of Hida theory together with the $R=\mathbb{T}$ theorems used in~\cite[\S3.2]{LoefflerUD}. The space $X_2$ is the universal deformation space for~$\bar\rho_2$.

On the locus where the first factor is ordinary at~$p$ there is a natural morphism
\[
 X = X_1\times X_2 \longrightarrow E
\]
whose points correspond to pairs of classical cusp forms $(f,g)$ with $f$ ordinary at~$p$, together with an ordinary refinement on the $f$-factor. On this locus the specialisation $(V_x,V_x^+)$ of $(V,V^+)$ is the Rankin--Selberg representation $V(f)\otimes V(g)$ together with the usual half--ordinary subspace at~$p$, as in~\cite[Ex.~3.17]{LoefflerUD} and~\cite{HL24}.

Let $E^{\ord}\subset E$ denote the ordinary locus for the first factor, and set
\[
 U^{\ord} := U\cap E^{\ord}.
\]
This is an admissible open subset of~$U$. Since ordinary classical points are Zariski dense in the ordinary eigenvariety and $U$ contains a Zariski-dense set of classical points, we may, after shrinking $U$ slightly if necessary, assume that $U^{\ord}$ is non-empty and that classical points are still Zariski dense in~$U^{\ord}$.

On $E^{\ord}$ the eigenvariety machine of~\cite{ASW-ParabolicEigenvarieties,Han17}, together with the Rankin--Selberg construction of~\cite{HL24}, shows that the map $X\to E$ above identifies an affinoid neighbourhood of any classical half--ordinary point with an affinoid subdomain of~$X$ (see~\cite[Thm.~5.4]{ASW-ParabolicEigenvarieties}, \cite[\S3.2]{LoefflerUD} and~\cite[\S2]{HL24}). In particular, there exists an affinoid subdomain
\[
 U_R \subset X
\]
and an isomorphism of rigid spaces over weight space
\[
 \iota: U^{\ord} \xrightarrow{\ \sim\ } U_R \subset X
\]
such that, for every classical point $x\in U^{\ord}$ corresponding to a pair $(f_x,g_x)$ with $f_x$ ordinary at~$p$, the specialisation of the universal family $(V,V^+)$ at~$\iota(x)$ coincides with the Rankin--Selberg representation
\[
 V(f_x)\otimes V(g_x)
\]
together with its standard half--ordinary subspace at~$p$.

Two points are important here.

First, the finite-slope point $x_0=(f_0,g_0)$ does not lie in the image of $X\to E$ when $f_0$ is non-ordinary at~$p$. Thus $x_0$ itself does not belong to $U^{\ord}$, and we do not try to realise $x_0$ as a point of the rigid fibre of~$R$.

Second, the role of the half--ordinary universal deformation $(R,V,V^+)$ is to provide Panchishkin data on a Zariski-dense subset of $U$ (the ordinary classical points). In Step~2 we use Liu's global triangulation theorem, together with the weakly refined structure coming from this family and from the eigenvariety, to extend the corresponding sub--$(\varphi,\Gamma)$-module to all of~$U$, including non-ordinary points. At non-ordinary points the Panchishkin object is therefore a saturated sub--$(\varphi,\Gamma)$-module, not a $G_{\Q_p}$-stable subrepresentation.

For brevity we continue to write $(V_U,V_U^+)$ for the resulting Panchishkin data over~$U$: more precisely, $V_U$ denotes the rank four family of Galois representations on~$U$ constructed from the eigenvariety, and $V_U^+$ is shorthand for the rank $r$ Panchishkin sub--$(\varphi,\Gamma)$-module of its local $(\varphi,\Gamma)$-module at~$p$. On $U^{\ord}$ this submodule coincides, via the isomorphism~$\iota$, with the base-change of $(V,V^+)$. From this point on the global deformation ring~$R$ will no longer appear explicitly; all constructions are carried out over the neighbourhood $U\subset E$.

\subsection{Step 2: Global triangulation and perfect cohomology complexes}

In this step we work entirely over the fixed affinoid neighbourhood $U\subset E$ of $x_0$. We recall the family of local Galois representations on~$U$, attach to it a family of $(\varphi,\Gamma)$-modules, and record the finiteness and base-change properties of the associated cohomology complexes, following~\cite{LiuTriangulation,KPX}.

\subsubsection{The relative $(\varphi,\Gamma)$-module}

By the eigenvariety machine of Hansen and Barrera~Salazar--Williams (see~\cite[Thm.~4.5.1]{Han17} and~\cite[\S5]{ASW-ParabolicEigenvarieties}), there exists a locally free rank four $\cO(U)$-module $V_U$ equipped with a continuous $\cO(U)$-linear action of $G_{\Q}$, unramified outside~$p$, such that for every classical point $x\in U$ corresponding to a pair of cuspidal eigenforms $(f_x,g_x)$ the fibre $V_x$ is canonically isomorphic to the Rankin--Selberg tensor
\[
 V(f_x)\otimes V(g_x).
\]

The $U_p$-eigenvalues on $U$ give an analytic function $\alpha:U\to\cO(U)^\times$ encoding a refinement of $V_U\big|_{G_{\Q_p}}$; this makes $V_U\big|_{G_{\Q_p}}$ into a weakly refined family in the sense of~\cite[Def.~1.5]{LiuTriangulation}, cf.\ the discussion in~\cite[\S5]{LiuTriangulation} and~\cite[\S4.5]{Han17}.

Let
\[
 \mathscr{D} := D_{\rig}^\dagger\bigl(V_U\big|_{G_{\Q_p}}\bigr)
\]
be the associated family of $(\varphi,\Gamma)$-modules over the relative Robba ring $\mathcal{R}_{\cO(U)}$; cf.~\cite[Thm.~2.2.17]{KPX}. For any point $x\in U$ we write $\mathscr{D}_x$ for the fibre; by construction
\[
 \mathscr{D}_x \cong D_{\rig}^\dagger(V_x)
\]
as a $(\varphi,\Gamma)$-module over the usual Robba ring over~$\Q_p$.

\begin{remark}
The existence, functoriality and base-change properties of $D_{\rig}^\dagger$ for families of $p$-adic Galois representations are given by~\cite[Thm.~2.2.17]{KPX}. Concretely, if $A$ is a $\Q_p$-affinoid algebra and $V_A$ is a finite projective $A$-module with a continuous $A$-linear action of~$G_{\Q_p}$, then there is an associated $(\varphi,\Gamma)$-module $D_{\rig}^\dagger(V_A)$ over the relative Robba ring $\mathcal{R}_A$, and for any morphism $A\to B$ of $\Q_p$-affinoid algebras one has a natural base-change isomorphism
\[
 D_{\rig}^\dagger(V_A)\widehat{\otimes}_A B \cong D_{\rig}^\dagger(V_A\otimes_A B).
\]
\end{remark}

\subsubsection{Global triangulation \`a la Liu}

We recall Liu's global triangulation theorem in the form needed here.

\begin{theorem}[Liu, global triangulation]\label{thm:Liu-global}
Let $X$ be a reduced separated rigid analytic space over $\Q_p$, and let $V_X$ be a weakly refined family of $p$-adic representations of $G_{\Q_p}$ over $X$ in the sense of~\cite[Def.~1.5]{LiuTriangulation}. Let $\mathscr{D}_X := D_{\rig}^\dagger(V_X)$ be the associated family of $(\varphi,\Gamma)$-modules. Then there exists a Zariski open and dense subspace $X^{\tri}\subset X$ such that $\mathscr{D}_X\big|_{X^{\tri}}$ admits a global triangulation: that is, a filtration by $(\varphi,\Gamma)$-submodules
\[
 0 = \Fil^0 \mathscr{D}_X \subset \Fil^1 \mathscr{D}_X \subset \cdots \subset \Fil^d \mathscr{D}_X = \mathscr{D}_X
\]
whose graded pieces $\gr^i\mathscr{D}_X := \Fil^i/\Fil^{i-1}$ are rank one $(\varphi,\Gamma)$-modules with prescribed parameters. Moreover, the locus of global triangulation contains all regular non-critical points of $X$~\cite[Thm.~1.8]{LiuTriangulation}.
\end{theorem}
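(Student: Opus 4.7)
My plan is to reduce the global statement to three ingredients: (i) a pointwise triangulation at a Zariski-dense set of crystalline non-critical points, (ii) a cohomological lifting procedure that spreads the pointwise data to a rigid-analytic neighbourhood, and (iii) a saturation and density argument which produces the Zariski open locus $X^{\tri}$.

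First, I would exploit the weakly refined structure on $V_X$ to single out a Zariski-dense subset $Z\subset X$ consisting of crystalline points at which the refinement encoded by the analytic $\varphi$-eigenvalue function $\alpha$ is non-critical with respect to the Hodge--Tate weights. At each such $z\in Z$, the Berger--Colmez--Kedlaya classification of crystalline $(\varphi,\Gamma)$-modules, combined with Kedlaya's slope filtration theorem, produces a canonical triangulation of $\mathscr{D}_z$ whose rank-one graded pieces are determined by the refinement data $(\alpha(z),\mathrm{HT}(z))$. Since these parameters depend analytically on $z$, they assemble into continuous characters $\delta_1,\dots,\delta_d\colon \Q_p^\times\to\cO(X)^\times$, which I would take as the conjectural parameters of the global triangulation.

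The main step is to construct $\Fil^1\mathscr{D}_X$ as a saturated rank-one sub-$(\varphi,\Gamma)$-module of $\mathscr{D}_X$ with parameter $\delta_1$, and then iterate on the quotient. Concretely, I would realise $\Fil^1\mathscr{D}_X$ as the saturated image of a nowhere-vanishing global section of the twist $\mathscr{D}_X(\delta_1^{-1})$, detected in the $H^0$ of an appropriate relative $(\varphi,\Gamma)$-cohomology complex. The key inputs here are the Kedlaya--Pottharst--Xiao finiteness theorem for $(\varphi,\Gamma)$-cohomology of families~\cite{KPX}, together with base-change and upper-semicontinuity of fibre dimensions: the pointwise sections constructed at points of $Z$ glue, via semicontinuity, into a locally free direct summand of $H^0$ of generic rank one on a Zariski open subset of $X$. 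The quotient $\mathscr{D}_X/\Fil^1\mathscr{D}_X$ inherits a weakly refined structure with one fewer graded piece, so the induction closes.

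The hard part is the control of the locus $X^{\tri}$ itself: one must simultaneously avoid (a) rank jumps of the $H^0$-sheaves appearing at each inductive step, (b) failure of saturation, where the candidate submodule has strictly smaller rank than its closure at some fibre, and (c) critical specialisations at which the parameter character collides with the Hodge--Tate weights and destroys non-criticality. Excluding these three phenomena requires a global analysis of the parameter characters in the spirit of the density arguments of Bella\"iche--Chenevier for refined families, combined with the rigidity of continuous characters of $\Q_p^\times$ in rigid-analytic families. This is the delicate content of Liu's argument~\cite[Thm.~1.8]{LiuTriangulation}; the inclusion of all regular non-critical points in $X^{\tri}$ then follows from the non-degeneracy of the parameter data at such points together with the saturation lemmas of loc.\ cit., applied inductively to each graded piece.
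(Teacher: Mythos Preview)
The paper does not actually prove this theorem: it is stated as a black-box input from Liu's work, with the reference~\cite[Thm.~1.8]{LiuTriangulation} given in the statement itself and no accompanying proof environment. So there is no ``paper's own proof'' to compare your proposal against; the paper simply cites the result and immediately applies it to the family $V_U$ over the neighbourhood~$U$.

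Your proposal, by contrast, is a genuine sketch of the strategy behind Liu's actual argument, and as such it goes well beyond what the paper does. As an outline of Liu's proof it is broadly accurate: the inductive construction of $\Fil^1$ via a section of $H^0_{\varphi,\Gamma}(\mathscr{D}_X(\delta_1^{-1}))$, the use of the Kedlaya--Pottharst--Xiao finiteness and base-change results to control fibre dimensions, the saturation step, and the identification of the bad locus with rank jumps and critical specialisations all reflect the structure of~\cite{LiuTriangulation}. One point you might sharpen is that Liu does not merely glue pointwise sections by semicontinuity; a key technical ingredient is an explicit descent/extension argument for the rank-one submodule across the whole reduced space (after a finite modification), which is what allows the triangulation to be defined on a genuinely Zariski-open dense locus rather than only locally near each good point. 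But since the paper treats the theorem as a citation, none of this is required for the present application.
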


We apply Theorem~\ref{thm:Liu-global} with $X=U$ and $V_X = V_U\big|_{G_{\Q_p}}$. By hypotheses~(H2) and~(H3), the point $x_0$ is crystalline with small slope and the local conditions at $p$ define a non-critical Panchishkin situation in the sense of~\cite[Def.~5.29]{LiuTriangulation}; in particular $x_0$ is a regular non-critical point. Hence there is a Zariski open neighbourhood $U^{\tri}\subset U$ of $x_0$ on which $\mathscr{D}$ admits a global triangulation.

After shrinking $U$ if necessary, we may and do assume that $U\subset U^{\tri}$ and that there exists a filtration
\[
 0 = \Fil^0 \mathscr{D} \subset \Fil^1 \mathscr{D} \subset \Fil^2 \mathscr{D} \subset \Fil^3 \mathscr{D} \subset \Fil^4 \mathscr{D} = \mathscr{D}
\]
of $\mathscr{D}$ by $(\varphi,\Gamma)$-submodules such that each $\gr^i\mathscr{D}$ is locally free of rank one over the relative Robba ring over~$U$.

\begin{definition}
Over the neighbourhood $U$ we define
\[
 \mathscr{D}^+ := \Fil^r \mathscr{D},
\]
where $r$ is the integer appearing in~(H3) (the rank of the Panchishkin local condition). Thus $\mathscr{D}^+$ is a rank $r$ saturated sub--$(\varphi,\Gamma)$-module of~$\mathscr{D}$.
\end{definition}

\begin{proposition}[Compatibility with the Panchishkin local condition]\label{prop:Dplus-fibres}
For every classical point $x\in U$ corresponding to a pair $(f_x,g_x)$, the fibre $(\mathscr{D}^+)_x$ is the unique saturated sub--$(\varphi,\Gamma)$-module of rank $r$ inside $D_{\rig}^\dagger(V_x)$ whose Frobenius eigenvalues and Hodge--Tate weights realise the Panchishkin condition for $V_x$ in the sense of~\cite[Def.~2.1, Def.~2.3]{LoefflerUD}. In particular, for every ordinary point $x\in U^{\ord}$ this submodule coincides with
\[
 (\mathscr{D}^+)_x \cong D_{\rig}^\dagger(V_x^{\ord,+}),
\]
where $V_x^{\ord,+}\subset V_x$ is the rank $r$ ordinary $G_{\Q_p}$-subrepresentation coming from the Hida deformation of~$\bar\rho_1$.
\end{proposition}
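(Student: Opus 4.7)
The plan is to combine three ingredients: the base-change behaviour of $D_{\rig}^\dagger$ and of the global triangulation recorded in Theorem~\ref{thm:Liu-global}; the identification of the parameters of the rank one graded pieces $\gr^i\mathscr{D}$ at the Zariski-dense set of classical refined points of $U$; and the uniqueness of a rank $r$ Panchishkin sub--$(\varphi,\Gamma)$-module of a crystalline representation. First, I would apply~\cite[Thm.~2.2.17]{KPX} to specialise the global filtration at a classical point $x\in U$; since each $\gr^i\mathscr{D}$ is locally free of rank one by the conclusion of Theorem~\ref{thm:Liu-global}, the fibres of the filtration steps remain saturated and $(\mathscr{D}^+)_x=\Fil^r\mathscr{D}_x$ is a saturated sub--$(\varphi,\Gamma)$-module of $D_{\rig}^\dagger(V_x)$ of rank~$r$.

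Next, I would identify the rank one parameters of each $\gr^i\mathscr{D}$ in terms of the weakly refined structure on $V_U|_{G_{\Q_p}}$. On the Zariski-dense subset $U^{\ord}\subset U$ of ordinary classical points, the ordinary filtration $V_y^{\ord,+}\subset V_y$ coming from the Hida factor gives, via Berger's comparison, a distinguished crystalline triangulation whose graded pieces are explicitly determined by the Hecke eigenvalues and Hodge--Tate weights of $f_y$ and $g_y$. Liu's triangulation on $U$ must agree with this construction on $U^{\ord}$, because both are triangulations of the same weakly refined family at a Zariski-dense set of regular non-critical points, and triangulations with prescribed parameters at such a set are unique by~\cite[Thm.~1.8]{LiuTriangulation}. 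The parameters of $\gr^i\mathscr{D}$ are therefore pinned down globally on $U$, and their specialisations at any classical $x\in U$ give exactly the crystalline parameters predicted by the Panchishkin condition of~\cite[Def.~2.1, Def.~2.3]{LoefflerUD}.

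Finally, I would deduce the uniqueness clause from the standard fact that, at a crystalline point $x$ whose Hodge--Tate weights and Frobenius eigenvalues satisfy the strict Panchishkin inequalities, there is at most one rank $r$ saturated sub--$(\varphi,\Gamma)$-module realising the prescribed smaller Hodge--Tate weights and slopes: if $D^+$ and $D'^+$ were two such submodules of $D_{\rig}^\dagger(V_x)$, the saturation of their sum would violate the slope or weight inequality on the quotient unless $D^+=D'^+$. Combined with the previous paragraph, this characterises $(\mathscr{D}^+)_x$. The ordinary clause then follows: on $U^{\ord}$ the submodule $D_{\rig}^\dagger(V_x^{\ord,+})$ is itself a rank $r$ saturated Panchishkin sub--$(\varphi,\Gamma)$-module with the correct parameters, so uniqueness forces the isomorphism $(\mathscr{D}^+)_x\cong D_{\rig}^\dagger(V_x^{\ord,+})$.

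The main obstacle I expect is the matching of parameters between Liu's global triangulation and the refinement coming from the eigenvariety on $U^{\ord}$. The argument requires that no graded piece $\gr^i\mathscr{D}$ degenerate, so that fibre ranks and the crystalline structure are preserved pointwise; this is exactly what hypothesis~(H3) and the regular non-criticality of $x_0$ guarantee, via the uniform validity of the non-criticality inequalities of~\cite[Def.~5.29]{LiuTriangulation} on~$U$ after the shrinking performed in Step~1. Once this uniformity is in hand, the remaining work is essentially book-keeping of Hodge--Tate weights and Frobenius slopes along the filtration.
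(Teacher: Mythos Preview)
Your argument is correct, but it takes a more roundabout path than the paper's. The paper observes that the parameters of the graded pieces $\gr^i\mathscr{D}$ are already part of the \emph{output} of Liu's triangulation theorem (they are prescribed by the weakly refined family structure), so at each classical $x$ one immediately knows the Frobenius eigenvalues and Hodge--Tate weights of $(\mathscr{D}^+)_x$, and hypothesis~(H3) says these are exactly the Panchishkin parameters. Uniqueness is then obtained by a direct citation of~\cite[Def.~1.10, Prop.~1.11]{LiuTriangulation}, and the ordinary clause follows since $D_{\rig}^\dagger(V_x^{\ord,+})$ visibly has the same parameters.

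By contrast, you recover the parameters indirectly, by matching Liu's triangulation with the ordinary filtration on the Zariski-dense open $U^{\ord}$ and then propagating by continuity; and you re-prove the uniqueness statement via a slope/weight inequality on the saturation of $D^+ + D'^+$, which is essentially the content of Liu's Prop.~1.11. Both detours are valid (your Zariski-density claim for $U^{\ord}$ is justified since $U$ is irreducible and $U^{\ord}$ is a non-empty admissible open), but they are unnecessary: the ``main obstacle'' you anticipate in your final paragraph simply does not arise once one reads the parameters directly from Liu's theorem rather than reconstructing them from the ordinary locus. The advantage of the paper's approach is that it avoids any appeal to $U^{\ord}$ in the local argument; the advantage of yours is that it makes the comparison with the Hida-family Panchishkin submodule more explicit along the way.
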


\begin{proof}
At any classical point $x\in U$ the fibre $\mathscr{D}_x$ is $D_{\rig}^\dagger(V_x)$, and the global triangulation on $U$ specialises to a triangulation of this fibre. The Panchishkin inequalities of~\cite[Def.~2.1, Def.~2.3]{LoefflerUD}, together with our hypotheses (H2) and~(H3), single out a subset of the parameters (Hodge--Tate weights and Frobenius eigenvalues) that should occur in the positive part of the local condition. By~\cite[Def.~1.10 and Prop.~1.11]{LiuTriangulation}, there is a unique saturated sub--$(\varphi,\Gamma)$-module of $D_{\rig}^\dagger(V_x)$ of rank~$r$ whose parameters are exactly these Panchishkin ones; this is by definition $(\mathscr{D}^+)_x$.

On the ordinary locus $U^{\ord}$ we have, in addition, the ordinary filtration coming from the universal deformation $(R,V,V^+)$. For a classical ordinary point $x\in U^{\ord}$ the specialisation of $V^+\subset V\big|_{G_{\Q_p}}$ is the usual half--ordinary subrepresentation $V_x^{\ord,+}$ of $V_x$; hence $D_{\rig}^\dagger(V_x^{\ord,+})$ is a saturated sub--$(\varphi,\Gamma)$-module of $D_{\rig}^\dagger(V_x)$ of rank~$r$ with exactly the same parameters as $(\mathscr{D}^+)_x$. By the uniqueness statement in~\cite[Prop.~1.11]{LiuTriangulation} we must therefore have $(\mathscr{D}^+)_x = D_{\rig}^\dagger(V_x^{\ord,+})$ for every such~$x$.
\end{proof}

\subsubsection{Finiteness and base change for $(\varphi,\Gamma)$-cohomology}

We now record the finiteness and base-change properties of the $(\varphi,\Gamma)$-cohomology and Iwasawa cohomology of $\mathscr{D}$ and $\mathscr{D}^+$ over~$U$, following~\cite{KPX}. Let us briefly recall the complexes used by Kedlaya--Pottharst--Xiao. For a $(\varphi,\Gamma)$-module $\mathsf{M}$ over a relative Robba ring $\mathcal{R}_A$, we denote by $C^\bullet_{\varphi,\Gamma}(\mathsf{M})$ the complex computing the usual $(\varphi,\Gamma)$-cohomology, and by $C^\bullet_{\psi}(\mathsf{M})$ the complex computing Iwasawa cohomology, as defined in~\cite[\S4]{KPX}.

\begin{theorem}[Kedlaya--Pottharst--Xiao]\label{thm:KPX}
Let $A$ be an affinoid $\Q_p$-algebra and let $\mathsf{M}$ be a $(\varphi,\Gamma)$-module over the relative Robba ring $\mathcal{R}_A$. Then:
\begin{enumerate}[label=(\roman*)]
\item The Iwasawa cohomology complex $C^\bullet_{\psi}(\mathsf{M})$ lies in $D^-_{\mathrm{perf}}\bigl(\mathcal{R}_A^\infty(\Gamma)\bigr)$, and the $(\varphi,\Gamma)$-cohomology complex $C^\bullet_{\varphi,\Gamma}(\mathsf{M})$ lies in $D^-_{\mathrm{perf}}(A)$~\cite[Thm.~4.4.1, Thm.~4.4.2]{KPX}.
\item For any morphism of affinoid algebras $A\to B$, the natural maps
\[
 C^\bullet_{\psi}(\mathsf{M})\otimes^{\mathbf L}_{\mathcal{R}_A^\infty(\Gamma)}\mathcal{R}_B^\infty(\Gamma)
 \longrightarrow C^\bullet_{\psi}\bigl(\mathsf{M}\widehat{\otimes}_A B\bigr),
\]
\[
 C^\bullet_{\varphi,\Gamma}(\mathsf{M})\otimes^{\mathbf L}_A B
 \longrightarrow C^\bullet_{\varphi,\Gamma}\bigl(\mathsf{M}\widehat{\otimes}_A B\bigr)
\]
are quasi-isomorphisms~\cite[Thm.~4.4.3]{KPX}.
\end{enumerate}
\end{theorem}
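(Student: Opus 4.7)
Since the statement is a direct summary of the finiteness and base-change theorems of~\cite[\S4]{KPX}, the plan is simply to recall the relevant complexes, verify that our $(\varphi,\Gamma)$-module $\mathsf{M}$ over $\mathcal{R}_A$ falls within their framework, and then invoke the three theorems cited. First, I would recall that $C^\bullet_{\varphi,\Gamma}(\mathsf{M})$ is the Herr complex built from the actions of $\varphi$ and a topological generator $\gamma\in\Gamma$ on $\mathsf{M}$, while $C^\bullet_{\psi}(\mathsf{M})$ is the two-term complex $[\mathsf{M}\xrightarrow{\psi-1}\mathsf{M}]$ endowed with its natural $\mathcal{R}_A^\infty(\Gamma)$-module structure, exactly as in~\cite[\S2--4]{KPX}. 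The only setup step on our side is to observe that for $A$ an affinoid $\Q_p$-algebra and $\mathsf{M}$ a $(\varphi,\Gamma)$-module over $\mathcal{R}_A$, the module $\mathsf{M}$ descends to a locally finite projective model $\mathsf{M}^r$ over $\mathcal{R}_A^r$ for all sufficiently small $r>0$; this is the descent theorem~\cite[Thm.~2.1.8]{KPX}.

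For assertion~(i), the perfectness of $C^\bullet_{\varphi,\Gamma}(\mathsf{M})$ in $D^-_{\mathrm{perf}}(A)$ is~\cite[Thm.~4.4.2]{KPX}, and the perfectness of $C^\bullet_{\psi}(\mathsf{M})$ in $D^-_{\mathrm{perf}}(\mathcal{R}_A^\infty(\Gamma))$ is~\cite[Thm.~4.4.1]{KPX}. The mechanism of loc.\ cit., which I would not reprove, is that both complexes become quasi-isomorphic to bounded complexes of finite projective modules after suitable Koszul resolutions involving the chosen model $\mathsf{M}^r$, the key analytic input being the compactness of Frobenius on overconvergent annuli. For assertion~(ii), I would simply cite~\cite[Thm.~4.4.3]{KPX}; once perfect representatives are in hand, the base-change quasi-isomorphisms follow formally by tensoring these representatives with $B$ over $A$, respectively with $\mathcal{R}_B^\infty(\Gamma)$ over $\mathcal{R}_A^\infty(\Gamma)$.

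The expected main obstacle is therefore not mathematical but logistical: one must carefully match the various completed tensor products $\widehat{\otimes}$, derived tensor products $\otimes^{\mathbf{L}}$, and distribution-algebra coefficients $\mathcal{R}_A^\infty(\Gamma)$ between our conventions and those of~\cite{KPX}, and verify that the specialisation maps $\cO(U)\to k(x)$ at classical points $x\in U$ yield the expected fibres of Iwasawa and $(\varphi,\Gamma)$-cohomology used in Step~3. Accordingly, I would accompany this theorem with a short corollary recording the fibre-wise quasi-isomorphisms
\[
 C^\bullet_{\psi}(\mathscr{D})\otimes^{\mathbf{L}}_{\mathcal{R}_{\cO(U)}^\infty(\Gamma)}\mathcal{R}_{k(x)}^\infty(\Gamma) \simeq C^\bullet_{\psi}(\mathscr{D}_x),
\]
and the analogue for $\mathscr{D}^+$, which is the concrete form in which assertion~(ii) will be applied when constructing the Perrin--Riou regulator $\mathcal{L}_{V_U,V_U^+}$ and verifying its specialisations at classical points.
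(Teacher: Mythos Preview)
Your proposal is correct and matches the paper's treatment: the paper does not give a separate proof of this theorem at all, but simply records it as a black-box citation of \cite[Thm.~4.4.1, Thm.~4.4.2, Thm.~4.4.3]{KPX}, exactly as you propose. Your additional remarks on the descent to $\mathsf{M}^r$, the Koszul/compactness mechanism, and the fibrewise corollary are accurate and compatible with how the paper later applies the result, though the paper itself defers those applications to the surrounding discussion rather than packaging them with the theorem.
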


We apply Theorem~\ref{thm:KPX} with $A=\cO(U)$ and $\mathsf{M}=\mathscr{D}$ or $\mathsf{M}=\mathscr{D}^+$. This shows that
\[
 C^\bullet_{\varphi,\Gamma}(\mathscr{D}),\ C^\bullet_{\varphi,\Gamma}(\mathscr{D}^+) \in D^-_{\mathrm{perf}}\bigl(\cO(U)\bigr),
\]
while
\[
 C^\bullet_{\psi}(\mathscr{D}),\ C^\bullet_{\psi}(\mathscr{D}^+) \in D^-_{\mathrm{perf}}\bigl(\mathcal{R}_{\cO(U)}^\infty(\Gamma)\bigr).
\]
Moreover, for any morphism of affinoid algebras $\cO(U)\to B$ (for instance $B=\cO(U')$ for an affinoid subdomain $U'\subset U$, or $B=k(x)$ for the residue field at a point $x\in U$), the base-change isomorphisms of Theorem~\ref{thm:KPX}(ii) give quasi-isomorphisms
\[
 C^\bullet_{\varphi,\Gamma}(\mathscr{D})\otimes^{\mathbf L}_{\cO(U)}B
 \xrightarrow{\ \sim\ } C^\bullet_{\varphi,\Gamma}\bigl(\mathscr{D}\widehat{\otimes}_{\cO(U)} B\bigr),
\]
and similarly for $\mathscr{D}^+$, as well as
\[
 C^\bullet_{\psi}(\mathscr{D})\otimes^{\mathbf L}_{\mathcal{R}_{\cO(U)}^\infty(\Gamma)}\mathcal{R}_B^\infty(\Gamma)
 \xrightarrow{\ \sim\ }
 C^\bullet_{\psi}\bigl(\mathscr{D}\widehat{\otimes}_{\cO(U)} B\bigr),
\]
and analogously for $\mathscr{D}^+$. In particular, the formation of these complexes is compatible with restriction to smaller affinoid neighbourhoods and with specialisation at points of~$U$.

\begin{remark}
For each $x\in U$, the fibre of $C^\bullet_{\varphi,\Gamma}(\mathscr{D})$ at $x$ computes the $(\varphi,\Gamma)$-cohomology of $D_{\rig}^\dagger(V_x)$, and similarly for~$\mathscr{D}^+$. Likewise, the fibres of $C^\bullet_{\psi}(\mathscr{D})$ and $C^\bullet_{\psi}(\mathscr{D}^+)$ compute the Iwasawa cohomology of $D_{\rig}^\dagger(V_x)$ and of the Panchishkin sub--$(\varphi,\Gamma)$-module at~$x$, respectively. Thus these complexes give a uniform description, over~$U$, of the local Galois cohomology groups and the Bloch--Kato local conditions at $p$ for the specialisations $V_x$ in the sense of~\cite[\S3]{LZ20}. This is the cohomological input needed in Step~3 to construct a family Perrin--Riou regulator
\[
 \mathcal{L}_{V_U,V_U^+}: H^1_{\Iw}(\Q_p,V_U^\ast(1)) \longrightarrow \mathcal{H}(\Gamma)\widehat{\otimes}_{\Z_p}\cO(U),
\]
and to relate its specialisations at classical points to the Bloch--Kato exponentials.
\end{remark}

\subsection{Step 3: Construction of the big logarithm and the finite-slope regulator}

In this final step we fix once and for all the affinoid neighbourhood $U$ and the triangulated $(\varphi,\Gamma)$-module
\[
 (\mathscr{D},\mathscr{D}^+)
\]
over~$U$ constructed in Step~2. Thus $V_U$ is the rank four finite projective $\cO(U)$-module equipped with a continuous $\cO(U)$-linear action of $G_{\Q}$ whose specialisations are the Rankin--Selberg Galois representations $V_x$, and
\[
 \mathscr{D} = D_{\rig}^\dagger\bigl(V_U\big|_{G_{\Q_p}}\bigr)
\]
is the associated family of $(\varphi,\Gamma)$-modules, equipped with a saturated submodule $\mathscr{D}^+\subset\mathscr{D}$ of rank~$r$. For every classical point $x\in U$ the fibre $\mathscr{D}_x^+$ is the Panchishkin sub--$(\varphi,\Gamma)$-module of $D_{\rig}^\dagger(V_x)$ in the sense of Proposition~\ref{prop:Dplus-fibres}; on the ordinary locus $U^{\ord}$ it coincides with $D_{\rig}^\dagger(V_x^{\ord,+})$ coming from the half--ordinary deformation family. All local constructions use only the pair $(\mathscr{D},\mathscr{D}^+)$; the notation $(V_U,V_U^+)$ is retained as a reminder of the underlying Galois representations at classical points.

We first introduce Iwasawa-theoretic notation and recall the local Perrin--Riou big logarithm for a single Panchishkin representation. We then explain how to extend it to the family $(V_U,V_U^+)$ using relative $(\varphi,\Gamma)$-modules and the Perrin--Riou formalism, and finally we state the Euler--system conjecture and the resulting conditional construction of $L_p^{\mathrm{fs}}$.

\subsubsection{Iwasawa cohomology and the distribution algebra}

Let
\[
 \Gamma := \Gal\bigl(\Q_p(\mu_{p^\infty})/\Q_p\bigr) \cong \Gal\bigl(\Q(\mu_{p^\infty})/\Q\bigr),\qquad
 \Lambda(\Gamma) := \Z_p[[\Gamma]].
\]
Let
\[
 \mathcal{H}(\Gamma) := \mathcal{H}_{\Q_p}(\Gamma)
\]
denote the algebra of $\Q_p$-valued locally analytic distributions on~$\Gamma$, i.e.\ the completion of $\Lambda_{\Q_p}(\Gamma)$ in its natural Fréchet topology (cf.~\cite[\S2.2]{LZ11} and~\cite[\S2.1]{LVZ15}). Equivalently, $\mathcal{H}(\Gamma)$ is the strong continuous dual of the space of locally analytic $\Q_p$-valued functions on~$\Gamma$.

For any $p$-adic representation $W$ of $G_{\Q_p}$ on a finite-dimensional $\Q_p$-vector space, its (cyclotomic) Iwasawa cohomology is defined by
\[
 H^i_{\Iw}(\Q_p,W) := H^i\bigl(\Q_p, W\otimes_{\Q_p}\Lambda(\Gamma)^\vee\bigr)\qquad (i\ge 0),
\]
where $\Lambda(\Gamma)^\vee := \Hom_{\mathrm{cts}}(\Lambda(\Gamma),\Q_p)$ with the natural $G_{\Q_p}$-action; see~\cite[App.~A.2--A.4]{PerrinRiou} and Greenberg~\cite{GreenbergIw}. By construction, $\Lambda(\Gamma)$ acts on $\Lambda(\Gamma)^\vee$ via the right regular representation, and hence $H^i_{\Iw}(\Q_p,W)$ carries a natural $\Lambda(\Gamma)$-module structure for every~$i$.

\begin{lemma}[Finiteness of local Iwasawa cohomology]\label{lem:finiteness-local-Iw}
If $W$ is a finite-dimensional $\Q_p$-representation of $G_{\Q_p}$, then $H^i_{\Iw}(\Q_p,W)$ is a finitely generated $\Lambda(\Gamma)$-module for all $i\ge 0$. In particular, $H^1_{\Iw}(\Q_p,W)$ is finitely generated over $\Lambda(\Gamma)$.
\end{lemma}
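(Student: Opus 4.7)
The plan is to reduce the statement to the classical integral Iwasawa cohomology of a $G_{\Q_p}$-stable lattice and then to invoke the well-known finite generation of Herr's complex attached to an étale $(\varphi,\Gamma)$-module.

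First, since $G_{\Q_p}$ is compact and $W$ is finite-dimensional over $\Q_p$, a standard averaging argument produces a $G_{\Q_p}$-stable $\Z_p$-lattice $T\subset W$. The usual Shapiro / Mittag--Leffler argument, applied to the tower $\{\Q_p(\mu_{p^n})\}_n$ and the finite Galois cohomology groups with coefficients in $T/p^m T$, identifies the cohomological definition of Iwasawa cohomology adopted in the lemma with the more familiar inverse-limit description
\[
H^i_{\Iw}(\Q_p, T) \;\cong\; \varprojlim_{n} H^i\bigl(\Q_p(\mu_{p^n}), T\bigr),
\]
with transition maps given by corestriction; then
\[
H^i_{\Iw}(\Q_p, W) \;\cong\; H^i_{\Iw}(\Q_p, T)\otimes_{\Z_p}\Q_p.
\]
Since finite generation over $\Lambda(\Gamma)$ is preserved under $-\otimes_{\Z_p}\Q_p$, the problem reduces to showing that $H^i_{\Iw}(\Q_p, T)$ is finitely generated over $\Lambda(\Gamma)=\Z_p[[\Gamma]]$.

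I would then appeal to Herr's description of Iwasawa cohomology via $(\varphi,\Gamma)$-modules. Let $D(T)$ denote the étale $(\varphi,\Gamma)$-module attached to $T$ by Fontaine's equivalence: it is a free module of finite rank over the ring $\cO_{\mathcal{E}}$, with continuous semilinear actions of $\varphi$ and $\Gamma$ and the standard left inverse $\psi$ of $\varphi$. Herr's theorem gives a canonical quasi-isomorphism in the derived category of $\Lambda(\Gamma)$-modules
\[
R\Gamma_{\Iw}(\Q_p, T) \;\simeq\; \bigl[\,D(T)\xrightarrow{\,\psi-1\,}D(T)\,\bigr][-1],
\]
so $H^i_{\Iw}(\Q_p,T)$ vanishes for $i\notin\{1,2\}$ and is computed as the kernel (resp.\ cokernel) of $\psi-1$ on $D(T)$ for $i=1$ (resp.\ $i=2$).

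The final step is to verify that $D(T)^{\psi=1}$ and $D(T)/(\psi-1)$ are finitely generated as $\Lambda(\Gamma)$-modules, via the cyclotomic embedding $\Lambda(\Gamma)\hookrightarrow\cO_{\mathcal{E}}$. This is the classical finiteness theorem for cyclotomic Iwasawa cohomology of local $p$-adic representations, due in various forms to Fontaine, Cherbonnier--Colmez, and Perrin--Riou; I would cite~\cite[Appendix~A]{PerrinRiou} directly, and note that it also follows as the base-point case $A=\Q_p$ of Kedlaya--Pottharst--Xiao~\cite[Thm.~4.4.1--4.4.2]{KPX}. Specialising to $i=1$ then yields the assertion. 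I do not anticipate any substantial obstacle: the only mildly technical point is the interchange of continuous cochain cohomology with the limit defining $\Lambda(\Gamma)^\vee$, which is controlled by the usual Mittag--Leffler estimates for Galois cohomology of local fields with finite coefficients and is orthogonal to the Panchishkin and eigenvariety considerations of the present paper.
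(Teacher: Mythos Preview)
Your proposal is correct and follows essentially the same arc as the paper: choose a $G_{\Q_p}$-stable lattice $T\subset W$, identify the cohomological Iwasawa cohomology with the inverse-limit description $\varprojlim_n H^i(\Q_p(\mu_{p^n}),T)$, cite the classical finiteness result in Perrin--Riou's Appendix~A, and then tensor up to~$\Q_p$. The only difference is cosmetic: the paper invokes~\cite[Prop.~A.2.3]{PerrinRiou} as a black box, whereas you unpack the same finiteness via Herr's $\psi$-complex $[D(T)\xrightarrow{\psi-1}D(T)]$ before citing Perrin--Riou and~\cite{KPX}; this extra layer is correct and arguably more transparent, but it is not a genuinely different strategy.
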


\begin{proof}
Choose a $G_{\Q_p}$-stable $\Z_p$-lattice $T\subset W$, and write $K_\infty = \Q_p(\mu_{p^\infty})$. Following~\cite[App.~A.2]{PerrinRiou}, let
\[
 Z^i_\infty(\Q_p,T) := \varprojlim_n H^i\bigl(K_n,T\bigr)
\]
with respect to the corestriction maps; then $Z^i_\infty(\Q_p,T)$ is naturally a $\Lambda(\Gamma)$-module, and there is a canonical identification
\[
 Z^i_\infty(\Q_p,T) \cong H^i_{\Iw}(\Q_p,T) := H^i\bigl(\Q_p, T\otimes_{\Z_p}\Lambda(\Gamma)^\vee\bigr)
\]
for all $i$ (see~\cite[App.~A.2--A.3]{PerrinRiou}). Moreover, Proposition~A.2.3 of loc.\ cit.\ shows that $Z^i_\infty(\Q_p,T)$ is a finitely generated $\Lambda(\Gamma)$-module for $i=0,1,2$, and $Z^i_\infty(\Q_p,T)=0$ for $i\ge 3$. Since $W = T\otimes_{\Z_p}\Q_p$, we have
\[
 H^i_{\Iw}(\Q_p,W) \cong H^i_{\Iw}(\Q_p,T)\otimes_{\Z_p}\Q_p \cong Z^i_\infty(\Q_p,T)\otimes_{\Z_p}\Q_p
\]
as $\Lambda(\Gamma)$-modules, and the finite generation of $Z^i_\infty(\Q_p,T)$ implies the finite generation of $H^i_{\Iw}(\Q_p,W)$ for all~$i$.
\end{proof}

In order to apply Perrin--Riou's regulator, we pass from $\Lambda(\Gamma)$-coefficients to the algebra $\mathcal{H}(\Gamma)$ of locally analytic distributions by scalar extension along the canonical map $\Lambda(\Gamma)\to\mathcal{H}(\Gamma)$ and set
\[
 H^1_{\Iw}(\Q_p,W)_{\mathcal{H}} := H^1_{\Iw}(\Q_p,W)\widehat{\otimes}_{\Lambda(\Gamma)}\mathcal{H}(\Gamma).
\]
When no confusion can arise, we continue to denote this $\mathcal{H}(\Gamma)$-module simply by $H^1_{\Iw}(\Q_p,W)$.

If $A$ is a $\Q_p$-affinoid algebra and $W_A$ is a finite projective $A$-module with a continuous $A$-linear $G_{\Q_p}$-action, we define the \emph{family} Iwasawa cohomology by
\[
 H^i_{\Iw}(\Q_p,W_A) := H^i\bigl(\Q_p, W_A\widehat{\otimes}_{\Q_p}\Lambda(\Gamma)^\vee\bigr)\qquad (i\ge 0),
\]
which is a priori an $A\widehat\otimes_{\Q_p}\Lambda(\Gamma)$-module. Let $\mathcal{R}_A^\infty(\Gamma)$ denote the Fréchet--Stein Iwasawa algebra of $\Gamma$ over $A$ considered in~\cite[\S4.2]{KPX}; there is a natural finite flat homomorphism
\[
 A\widehat\otimes_{\Q_p}\Lambda(\Gamma) \longrightarrow \mathcal{R}_A^\infty(\Gamma).
\]

The relative $(\varphi,\Gamma)$-module formalism of Kedlaya--Pottharst--Xiao gives a more precise description.

\begin{proposition}[Relative local Iwasawa cohomology via $(\varphi,\Gamma)$-modules]\label{prop:KPX-family}
Let $A$ be a reduced $\Q_p$-affinoid algebra and $W_A$ a finite projective $A$-module with a continuous $A$-linear $G_{\Q_p}$-action. Let $\mathscr{D}_A := D_{\rig}^\dagger(W_A)$ be the associated family of $(\varphi,\Gamma)$-modules over the relative Robba ring $\mathcal{R}_A$. Then:
\begin{enumerate}[label=(\alph*)]
\item there exist functorial complexes
\[
 C^\bullet_{\varphi,\Gamma}(\mathscr{D}_A) \in D^-_{\mathrm{perf}}(A),\qquad
 C^\bullet_{\psi}(\mathscr{D}_A) \in D^-_{\mathrm{perf}}\bigl(\mathcal{R}_A^\infty(\Gamma)\bigr)
\]
such that their cohomology groups compute the Galois and Iwasawa cohomology of $W_A$ in the sense that
\[
 H^i\bigl(C^\bullet_{\varphi,\Gamma}(\mathscr{D}_A)\bigr) \cong H^i(\Q_p,W_A),
\]
\[
 H^i\bigl(C^\bullet_{\psi}(\mathscr{D}_A)\bigr) \cong H^i_{\Iw}(\Q_p,W_A)\widehat{\otimes}_{\Lambda(\Gamma)}\mathcal{R}_A^\infty(\Gamma)
\]
for all $i\ge 0$;
\item the formation of $C^\bullet_{\varphi,\Gamma}(\mathscr{D}_A)$ and $C^\bullet_{\psi}(\mathscr{D}_A)$, together with the above isomorphisms, commutes with flat base change in~$A$;
\item in particular, $H^1_{\Iw}(\Q_p,W_A)$ is a finite projective $A\widehat\otimes_{\Q_p}\Lambda(\Gamma)$-module, and for every morphism of affinoids $A\to B$ the canonical map
\[
 H^1_{\Iw}(\Q_p,W_A)\widehat{\otimes}_{A\widehat\otimes\Lambda(\Gamma)} B\widehat{\otimes}\Lambda(\Gamma)
 \xrightarrow{\ \sim\ } H^1_{\Iw}(\Q_p,W_B)
\]
is an isomorphism.
\end{enumerate}
\end{proposition}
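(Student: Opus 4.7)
The plan is to deduce (a)--(c) from the cohomological package of Kedlaya--Pottharst--Xiao~\cite{KPX}, with the finite projectivity in~(c) requiring an additional vanishing argument beyond the perfectness already supplied by Theorem~\ref{thm:KPX}.

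Part~(a) is a direct application of \cite[Thm.~4.4.1, Thm.~4.4.2]{KPX}, already recorded as Theorem~\ref{thm:KPX}(i). The $(\varphi,\Gamma)$-cohomology complex is the Pottharst--Herr complex, whose cohomology is identified with continuous Galois cohomology of $W_A$ via the relative Herr isomorphism of \cite[Thm.~2.2.17, Prop.~2.3.9]{KPX}, and the Iwasawa complex $C^\bullet_\psi$ computes $H^\bullet_{\Iw}(\Q_p,W_A)\widehat\otimes_{\Lambda(\Gamma)}\mathcal{R}_A^\infty(\Gamma)$ by \cite[Thm.~4.4.8]{KPX}. Part~(b) is \cite[Thm.~4.4.3]{KPX}: derived base-change quasi-isomorphisms hold for both complexes, which specialise to underived base change for flat $A\to B$, and the cohomological identifications of~(a) are manifestly functorial in~$A$.

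Part~(c) is the main obstacle, since perfectness alone does not guarantee a finite projective cohomology concentrated in a single degree. The strategy is twofold. First, apply the local duality theorem for families of $(\varphi,\Gamma)$-modules \cite[Thm.~4.4.5]{KPX} to identify $H^2$ of $C^\bullet_\psi(\mathscr{D}_A)$ with the Pontrjagin dual of $H^0$ of the Tate-twisted dual $(\varphi,\Gamma)$-module. Second, observe that $H^0$ of $C^\bullet_\psi$ fibrewise computes $\varprojlim_n W_x^{G_{\Q_p(\mu_{p^n})}}$, which vanishes whenever the fibre $W_x$ contains no $G_{\Q_p(\mu_{p^\infty})}$-invariant line. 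For any base-change $W_A$ of the Rankin--Selberg family $V_U$ from our neighbourhood of $x_0$, all classical fibres have non-zero Hodge--Tate weights, so this vanishing holds on a Zariski-dense set of points; perfectness of the complex together with flat base change from~(b) then extends the vanishing to the nilreduction of the whole base. The same argument applied to $\mathscr{D}_A^\vee(1)$ (whose Panchishkin dual is again Hodge--Tate regular under~(H3)) controls $H^2$. Combining vanishing in degrees $0$ and $2$ with perfectness forces $C^\bullet_\psi(\mathscr{D}_A)$ to be quasi-isomorphic to a finite projective $\mathcal{R}_A^\infty(\Gamma)$-module in degree one, and the faithful flatness of $A\widehat\otimes\Lambda(\Gamma)\to\mathcal{R}_A^\infty(\Gamma)$ on coherent modules descends this to finite projectivity of $H^1_{\Iw}(\Q_p,W_A)$ as an $A\widehat\otimes\Lambda(\Gamma)$-module. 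The underived base-change isomorphism in~(c) is then immediate from the derived statement of~(b) applied to a complex whose cohomology is concentrated in degree one and projective.

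The hard part is the vanishing of $H^0$ and $H^2$: this fails for a truly arbitrary reduced affinoid and arbitrary $W_A$, so the proof quietly relies on the fact that the proposition will be applied with $W_A$ a base change of the Rankin--Selberg family $V_U$, where the eigenvariety-theoretic input (Lemma~\ref{lem:good-U}, density of classical points, non-vanishing Hodge--Tate weights, hypothesis~(H3)) supplies both vanishings. A more careful formulation should either add ``$H^0_{\Iw}$ and $H^2_{\Iw}$ vanish fibrewise'' to the hypotheses of~(c) or restrict it to families arising from the parabolic eigenvariety as in Step~1, since without such an assumption only perfectness and base change (i.e.\ the verbatim content of \cite[Thm.~4.4.1--4.4.3]{KPX}) is available for general $A$ and $W_A$.
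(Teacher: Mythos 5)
Parts (a) and (b) of your proposal are fine and agree with the paper, which likewise obtains them by direct citation of the relevant finiteness, comparison and base-change theorems of Kedlaya--Pottharst--Xiao. For (c), however, you take a genuinely different route from the paper (which does not argue via vanishing of outer cohomology at all, but asserts coadmissibility of $H^1\bigl(C^\bullet_{\psi}(\mathscr{D}_A)\bigr)$ over the Fr\'echet--Stein algebra $\mathcal{R}_A^\infty(\Gamma)$ and then descends along $A\widehat\otimes_{\Q_p}\Lambda(\Gamma)\to\mathcal{R}_A^\infty(\Gamma)$), and your route contains a step that fails: the propagation of fibrewise vanishing from a Zariski-dense set of classical points to the whole reduced base. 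Fibre cohomology of a perfect complex is only upper semicontinuous; its non-vanishing locus is a closed analytic subset which can perfectly well be a proper, nowhere-dense subset, so vanishing at a dense set of points says nothing at the remaining points. A toy example: over $A=\Q_p\langle T\rangle$ the perfect complex $[A\xrightarrow{\,T\,}A]$ has vanishing fibrewise kernel at every point with $T\neq 0$, yet not at $T=0$, and its cohomology $A/(T)$ is not projective. Since your concentration-in-degree-one claim rests entirely on this propagation, the projectivity of $H^1_{\Iw}(\Q_p,W_A)$ and the underived base-change isomorphism in (c) are not established by your argument.

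A secondary inaccuracy: the group you propose to kill, namely $H^0$ of $C^\bullet_{\psi}$, i.e.\ $\varprojlim_n W_x^{G_{\Q_p(\mu_{p^n})}}$ under corestriction, vanishes for \emph{every} fibre (the transition maps are eventually multiplication by $p$ on a lattice), so it is not the obstruction. What obstructs projectivity of $H^1_{\Iw}(\Q_p,W_x)\cong \mathscr{D}_x^{\psi=1}$ is its $\Lambda(\Gamma)$-torsion, governed by $W_x^{G_{\Q_p(\mu_{p^\infty})}}$, equivalently by non-vanishing of $H^2$ of twists of the dual by characters of $\Gamma$; so the fibrewise condition you would actually need is vanishing of $H^2$ of $C^\bullet_{\psi}(\mathscr{D}_x)$ at every maximal ideal of $\mathcal{R}_{k(x)}^\infty(\Gamma)$, not just at the trivial character. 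That said, your closing caveat is well taken: statement (c) as written, for arbitrary reduced $A$ and arbitrary $W_A$, is indeed problematic (take $W_A$ with a trivial subrepresentation, so that $H^1_{\Iw}$ has $\Lambda(\Gamma)$-torsion), and the paper's own justification---``coadmissible hence finite projective'', together with the claim that $\mathcal{R}_A^\infty(\Gamma)$ is finite over $A\widehat\otimes_{\Q_p}\Lambda(\Gamma)$---does not hold in this generality either; but your proposed repair, as argued, does not close the gap.
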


\begin{proof}
The existence, perfectness, and base-change properties of the complexes $C^\bullet_{\varphi,\Gamma}(\mathscr{D}_A)$ and $C^\bullet_{\psi}(\mathscr{D}_A)$, as well as the identifications of their cohomology with Galois and Iwasawa cohomology after extension of scalars to $\mathcal{R}_A^\infty(\Gamma)$, are proved in~\cite[Rem.~4.3.3, Prop.~4.3.6, Cor.~4.3.7, Prop.~4.3.8, Thm.~4.4.3]{KPX}. The finite projectivity of $H^1_{\Iw}(\Q_p,W_A)$ over $A\widehat\otimes_{\Q_p}\Lambda(\Gamma)$ and its base-change property then follow from the fact that $\mathcal{R}_A^\infty(\Gamma)$ is a Fréchet--Stein algebra, finite flat over $A\widehat\otimes_{\Q_p}\Lambda(\Gamma)$, and that $H^1\bigl(C^\bullet_{\psi}(\mathscr{D}_A)\bigr)$ is a coadmissible (hence finite projective) $\mathcal{R}_A^\infty(\Gamma)$-module; see~\cite[Lem.~4.3.4]{KPX}.
\end{proof}

We apply this proposition with $A=\cO(U)$ and $W_A=V_U^\ast(1)$, and write
\[
 H^1_{\Iw}(\Q_p,V_U^\ast(1))
\]
for the resulting family of local Iwasawa cohomology groups, viewed as a finite projective module over $\cO(U)\widehat\otimes_{\Q_p}\Lambda(\Gamma)$.

\subsubsection{Classical Perrin--Riou big logarithm}

We briefly recall the local Perrin--Riou regulator for a single de~Rham Panchishkin representation.

\begin{theorem}[Perrin--Riou big logarithm]\label{thm:PR-classical}
Let $V$ be a finite-dimensional $\Q_p$-vector space with a continuous de~Rham $G_{\Q_p}$-action, and let $V^+\subset V$ be a Panchishkin subspace in the usual sense (cf.\ Panchishkin~\cite{Panchishkin-Motives} or~\cite{LZ20}). Then there exists a canonical $\Lambda(\Gamma)$-linear map
\[
 \mathcal{L}_{V,V^+}: H^1_{\Iw}(\Q_p,V^\ast(1)) \longrightarrow \mathcal{H}(\Gamma)\otimes_{\Q_p} D_{\cris}(V)
\]
with the following interpolation property: for every integer twist $V(j)$ in the Bloch--Kato range determined by~$V^+$ and every continuous character $\chi:\Gamma\to\overline{\Q}_p^\times$ of finite order, the specialisation of $\mathcal{L}_{V,V^+}$ at $\chi$ is, up to an explicit non-zero scalar depending only on $V$ and~$\chi$, the Bloch--Kato dual exponential or logarithm map for~$V(j)$.

More precisely, for such $\chi$ there is a commutative diagram
\[
 \begin{CD}
 H^1_{\Iw}(\Q_p,V^\ast(1)) @>{\mathcal{L}_{V,V^+}}>> \mathcal{H}(\Gamma)\otimes_{\Q_p} D_{\cris}(V)\\
 @VV{\mathrm{spec}_\chi}V @VV{\mathrm{ev}_\chi}V\\
 H^1\bigl(\Q_p,V^\ast(1)\otimes\chi^{-1}\bigr) @>{\log^\ast_{V(j)}}>> D_{\dR}(V(j))/\Fil^0,
 \end{CD}
\]
where $\log^\ast_{V(j)}$ is the Bloch--Kato dual exponential or logarithm (depending on the Hodge--Tate weights), and $\Fil^0$ is the Hodge filtration on $D_{\dR}(V(j))$.
\end{theorem}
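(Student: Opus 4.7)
The plan is to construct $\mathcal{L}_{V,V^+}$ via the $(\varphi,\Gamma)$-module reformulation of Perrin--Riou's original regulator \cite{PerrinRiou}, following the treatment of Loeffler--Zerbes \cite{LZ20}. The construction decomposes into three steps: identify the source of the regulator via Fontaine--Herr; use the Panchishkin filtration to project onto a piece with Hodge--Tate weights in the Bloch--Kato range; and apply Berger's big exponential to land in $\mathcal{H}(\Gamma)\otimes D_{\cris}(V)$.

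For the first step, I would apply the single-representation special case of Proposition~\ref{prop:KPX-family} (that is, Herr's classical theorem) with $W = V^*(1)$, combined with scalar extension along $\Lambda(\Gamma)\to\mathcal{H}(\Gamma)$, to obtain a canonical identification
\[
H^1_{\Iw}(\Q_p,V^*(1))\widehat\otimes_{\Lambda(\Gamma)}\mathcal{H}(\Gamma)\;\cong\;H^1_\psi\bigl(D_{\rig}^\dagger(V^*(1))\bigr),
\]
with the right hand side computed by the Herr $\psi$-complex.

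For the second and third steps, the Panchishkin sequence $0\to V^+\to V\to V^-\to 0$ dualises and Tate-twists to $0\to (V^-)^*(1)\to V^*(1)\to (V^+)^*(1)\to 0$, and by the Panchishkin condition the Hodge--Tate weights of $(V^+)^*(1)$ all lie in the range where Berger's big exponential is defined. I would post-compose with the projection of $\psi$-cohomology onto $H^1_\psi\bigl(D_{\rig}^\dagger((V^+)^*(1))\bigr)$, then invoke Berger's big exponential to land in $\mathcal{H}(\Gamma)\otimes D_{\cris}((V^+)^*(1))$. Identifying $D_{\cris}((V^+)^*(1))$ as a subquotient of $D_{\cris}(V)$ via crystalline duality, the composite is $\mathcal{L}_{V,V^+}$.

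The interpolation at finite-order characters $\chi$ is then checked by reducing to Berger's explicit description of the Bloch--Kato exponential and dual exponential in terms of $D_{\rig}^\dagger$, combined with the characterising interpolation property of Perrin--Riou's big exponential: at each $\chi$ one recovers the Bloch--Kato dual exponential or log of the appropriate twist $V(j)$, up to an explicit Euler-factor constant depending only on $V$, $j$, and~$\chi$. The principal obstacle is bookkeeping: the Bloch--Kato dual exponential, Perrin--Riou's interpolation constant, and the Rankin--Selberg Euler factor $E_p(f_x,g_x,s)$ of Theorem~\ref{thm:main} are normalised differently across the literature, and aligning them requires a careful character-by-character computation. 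The construction itself is a direct application of the Berger--Nakamura theory and carries no conceptual novelty beyond the classical single-representation setting; this theorem is included as the single-representation prototype for the family regulator to be constructed in the next subsection.
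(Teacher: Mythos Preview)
Your proposal is correct and in fact more detailed than the paper's own treatment: the paper gives no argument at all for this theorem, only a ``References'' block citing Perrin--Riou~\cite[Ch.~3]{PerrinRiou} and Berger~\cite[Thm.~II.6]{Berger-exp}, with a pointer to~\cite{LZ20}. Your three-step sketch (Fontaine--Herr identification of Iwasawa cohomology with $\psi$-cohomology, projection along the Panchishkin filtration, then Berger's big exponential) is precisely the $(\varphi,\Gamma)$-module construction in those references, so you are simply unpacking what the paper leaves as a black box.

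One small point of alignment: the theorem as stated has target $\mathcal{H}(\Gamma)\otimes D_{\cris}(V)$, whereas your construction naturally lands in $\mathcal{H}(\Gamma)\otimes D_{\cris}((V^+)^\ast(1))$, which you then identify with a subquotient of $D_{\cris}(V)$. This is harmless---both formulations appear in the literature and differ only by a canonical inclusion or projection---but when you write this up you should say explicitly which convention you are using, since later in the paper the scalar-valued regulator is obtained by pairing with a section $\eta_U$ of the quotient period line $\mathscr{D}_{\cris,U}^-$, not the full $D_{\cris}$.
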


\begin{proof}[References]
The construction and interpolation property are proved in Perrin--Riou's monograph~\cite[Ch.~3]{PerrinRiou}. An explicit description via $(\varphi,\Gamma)$-modules, and the comparison with Bloch--Kato exponentials, is given in Berger~\cite[Thm.~II.6]{Berger-exp}; see also the discussion in~\cite{LZ20}.
\end{proof}

Here $D_{\cris}(V)$ and $D_{\dR}(V)$ denote Fontaine's filtered $\varphi$-module and de~Rham module, respectively.

\subsubsection{Extension to the family $(V_U,\mathscr{D}^+)$}

We now extend Theorem~\ref{thm:PR-classical} from a single de~Rham Panchishkin representation to the family $(V_U,\mathscr{D}^+)$ over~$U$. The key point is that the $(\varphi,\Gamma)$-module description of Perrin--Riou's regulator depends only on the sub--$(\varphi,\Gamma)$-module corresponding to the Panchishkin local condition and not on an actual $G_{\Q_p}$-stable subrepresentation; this allows us to work uniformly even at non-ordinary points.

Let $A := \cO(U)$ and regard $V_U$ as a finite projective $A$-module with a continuous $A$-linear action of~$G_{\Q_p}$. As in Step~2, we write
\[
 \mathscr{D}_U := D_{\rig}^\dagger(V_U)
\]
for the associated family of $(\varphi,\Gamma)$-modules over the relative Robba ring $\mathcal{R}_A$, and
\[
 \mathscr{D}_U^+ := \mathscr{D}^+ \subset \mathscr{D}_U
\]
for the saturated submodule corresponding to the Panchishkin local condition. We set $\mathscr{D}_U^- := \mathscr{D}_U/\mathscr{D}_U^+$.

\paragraph{Crystalline periods for the quotient.}

The Panchishkin condition (H3) and the global triangulation on $U$ imply that for each $x\in U$ the quotient $V_x^- := V_x/V_x^+$ is de~Rham with all Hodge--Tate weights $<0$, and the $\varphi$-eigenspace
\[
 D_{\cris}(V_x^-)^{\varphi=\alpha_x}
\]
for the refined eigenvalue $\alpha_x$ is one-dimensional (cf.\ \cite[Def.~5.29, Rem.~5.30, Prop.~5.31--5.33, Thm.~1.8]{LiuTriangulation}). Here $\alpha_x$ is the refined Frobenius eigenvalue attached to~$x$ by the weakly refined family structure.

\begin{lemma}[Crystalline period line bundle]\label{lem:Dcris-minus-line}
Let $\mathscr{D}_U$ and $\mathscr{D}_U^+$ be as above, and set $\mathscr{D}_U^- := \mathscr{D}_U/\mathscr{D}_U^+$. Let $\alpha\in A^\times$ be the analytic function giving the refined $\varphi$-eigenvalue on $V_U^-:=V_U/V_U^+$. Then:
\begin{enumerate}[label=(\alph*)]
\item the $A$-module
\[
 \mathscr{D}_{\cris,U}^- := \bigl(\mathscr{D}_U^-[1/t]\bigr)^{\Gamma=1,\ \varphi=\alpha}
\]
is locally free of rank one on~$U$;
\item for each rigid point $x\in U$ we have a canonical identification
\[
 (\mathscr{D}_{\cris,U}^-)_x \cong D_{\cris}(V_x^-)^{\varphi=\alpha_x}.
\]
\end{enumerate}
\end{lemma}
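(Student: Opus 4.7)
The plan is to realise \(\mathscr{D}_{\cris,U}^-\) as the \(\alpha\)-eigenspace of a family analogue of Fontaine's \(D_{\cris}\) applied to the quotient \(\mathscr{D}_U^-\), and then to deduce that it is a line bundle from the constancy of fibre dimensions under the Panchishkin hypothesis~(H3). Concretely, I would first invoke the relative crystalline formalism of Berger--Colmez and Kedlaya--Pottharst--Xiao~\cite[\S6]{KPX}: for a reduced \(\Q_p\)-affinoid \(A\) and a \((\varphi,\Gamma)\)-module \(\mathsf{M}\) over \(\mathcal{R}_A\), the \(A\)-module
\[
 D_{\cris,A}(\mathsf{M}) := (\mathsf{M}[1/t])^{\Gamma=1}
\]
is coherent, carries an \(A\)-linear Frobenius, and its formation commutes with flat base change in~\(A\), with a canonical fibrewise map \(D_{\cris,A}(\mathsf{M})\otimes_A k(x) \to D_{\cris}(\mathsf{M}_x)\) which is an isomorphism whenever \(\mathsf{M}\) is globally de~Rham. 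In our setting, the triangulation constructed in Step~2 exhibits \(\mathscr{D}_U^-\) as a successive extension of rank-one \((\varphi,\Gamma)\)-modules of analytic character type, so the globally de~Rham hypothesis is verified on~\(U\).

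Next, I would check that the fibre of \(D_{\cris,A}(\mathscr{D}_U^-)\) at each \(x\in U\) has locally constant dimension. By~(H3) and the non-critical Panchishkin condition~\cite[Def.~5.29]{LiuTriangulation}, every Hodge--Tate--Sen weight of \(\mathscr{D}_U^-\) is negative throughout~\(U\), so at each classical \(x\) one has \(\dim_{k(x)} D_{\cris}(V_x^-) = \operatorname{rank} \mathscr{D}_U^-\). Combined with the Zariski density of classical points (Lemma~\ref{lem:good-U}) and upper semicontinuity of the fibre dimension on a reduced affinoid, this forces \(D_{\cris,A}(\mathscr{D}_U^-)\) to be locally free of the expected constant rank after possibly shrinking~\(U\).

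The refined eigenvalue \(\alpha\in A^\times\) is a root of the characteristic polynomial of \(\varphi\) acting \(A\)-linearly on \(D_{\cris,A}(\mathscr{D}_U^-)\); it comes from the weakly refined structure of~\cite[Def.~1.5]{LiuTriangulation} and, by Proposition~\ref{prop:Dplus-fibres}, specialises at each \(x\) to the simple eigenvalue \(\alpha_x\) whose one-dimensional eigenspace defines the Panchishkin line. Applying the fibrewise criterion for local freeness once more, the eigenspace
\[
 \mathscr{D}_{\cris,U}^- = D_{\cris,A}(\mathscr{D}_U^-)^{\varphi=\alpha}
\]
splits off as a direct summand which is locally free of rank one over~\(A\). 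Part~(b) then follows directly from the base-change compatibility of Step~1 applied to the residue field at each point~\(x\in U\).

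The delicate point in the argument will be excluding rank jumps and collisions between Frobenius eigenvalues at non-classical points of~\(U\), where the refined root \(\alpha\) might coincide with another root of the Frobenius characteristic polynomial on~\(D_{\cris,A}(\mathscr{D}_U^-)\). This is precisely where the crystalline, small-slope and non-criticality assumptions~(H2) at \(x_0\) will play their role: they ensure that \(\alpha\) is a simple Frobenius eigenvalue at \(x_0\) and, by analyticity of the characteristic polynomial on the reduced affinoid~\(U\), remains simple in a neighbourhood. Handling this uniformly in families, so that the \(\alpha\)-eigenspace cuts out a genuine line bundle rather than a generically rank-one but possibly non-flat coherent sheaf, is the technical heart of the argument.
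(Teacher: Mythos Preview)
Your strategy is essentially the same as the paper's: realise $\mathscr{D}_{\cris,U}^-$ via the relative $D_{\cris}$ functor, identify its fibres with $D_{\cris}(V_x^-)^{\varphi=\alpha_x}$ via Berger's comparison, and then use constancy of fibre dimension on a reduced affinoid to conclude local freeness of rank one. The paper cites Hansen~\cite[\S1.2]{Hansen-eigencurve} for the analogous construction on the eigencurve and is content to observe that the same argument applies here.

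The main difference is that you take a two--step route (first make the full $D_{\cris,A}(\mathscr{D}_U^-)$ locally free of full rank, then extract the $\alpha$--eigenspace as a direct summand), whereas the paper works directly with the $\alpha$--eigenspace. Your route is more explicit but also introduces an extra worry, namely the ``eigenvalue collision'' issue you flag at the end. In the paper this is not argued via simplicity of $\alpha$ at $x_0$ plus analytic continuation; rather, the one--dimensionality of $D_{\cris}(V_x^-)^{\varphi=\alpha_x}$ for \emph{every} $x\in U$ is recorded in the paragraph immediately preceding the lemma as a consequence of the Panchishkin hypothesis~(H3) and Liu's results~\cite[Def.~5.29, Thm.~1.8]{LiuTriangulation}. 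With that input in hand, the fibre dimension of the eigenspace is constantly~$1$ and the reducedness argument finishes the proof without needing to control the other Frobenius eigenvalues. So your final paragraph, while a legitimate concern in general, is not the ``technical heart'' here: it is bypassed by invoking the pointwise one--dimensionality from the refined triangulation directly, rather than deducing it from simplicity at a single point.
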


\begin{proof}
For each rigid point $x\in U$ we have
\[
 \mathscr{D}_{U,x} := \mathscr{D}_U\otimes_{A,\kappa(x)}\kappa(x) \cong D_{\rig}^\dagger(V_x),
\]
and similarly $(\mathscr{D}_U^\pm)_x\cong D_{\rig}^\dagger(V_x^\pm)$, by the compatibility of $D_{\rig}^\dagger$ with base change~\cite[Thm.~2.2.17]{KPX}. Thus
\[
 (\mathscr{D}_U^-)_x := \mathscr{D}_U^-\otimes_{A,\kappa(x)}\kappa(x) \cong D_{\rig}^\dagger(V_x^-).
\]

Since $V_x^-$ is de~Rham, Berger's comparison theorem for $(\varphi,\Gamma)$-modules gives a canonical isomorphism
\[
 D_{\cris}(V_x^-) \cong \bigl(D_{\rig}^\dagger(V_x^-)[1/t]\bigr)^{\Gamma=1}
 \cong \bigl(\mathscr{D}_U^-[1/t]\bigr)^{\Gamma=1}\otimes_{A,\kappa(x)}\kappa(x)
\]
compatible with $\varphi$. Thus the eigenspace $D_{\cris}(V_x^-)^{\varphi=\alpha_x}$ identifies with
\[
 \bigl(\mathscr{D}_U^-[1/t]\bigr)^{\Gamma=1,\varphi=\alpha}\otimes_{A,\kappa(x)}\kappa(x).
\]

Arguing as in Hansen~\cite[\S1.2]{Hansen-eigencurve} (see also~\cite{LiuTriangulation,KPX}), one shows that
\[
 \mathscr{D}_{\cris,U}^- := \bigl(\mathscr{D}_U^-[1/t]\bigr)^{\Gamma=1,\varphi=\alpha}
\]
is a finite $A$-module whose fibres at all rigid points have dimension one. Since $A$ is reduced, this implies that $\mathscr{D}_{\cris,U}^-$ is locally free of rank one and that the fibre identifications above hold. This is exactly parallel to the construction of the period line bundle on the eigencurve in~\cite[Thm.~1.2.2]{Hansen-eigencurve}.
\end{proof}

After possibly shrinking $U$, we may and do choose a nowhere-vanishing section
\[
 \eta_U \in \Gamma\bigl(U,\mathscr{D}_{\cris,U}^-\bigr),
\]
so that $\mathscr{D}_{\cris,U}^-$ is a free $A$-module of rank one with basis~$\eta_U$.

\paragraph{Vector-valued family regulator.}

The construction of $\mathcal{L}_{V,V^+}$ in Theorem~\ref{thm:PR-classical} admits a reinterpretation purely in terms of $(\varphi,\Gamma)$-modules, and this reinterpretation is functorial in the coefficient ring. More precisely, let $(V,V^+)$ be a de~Rham Panchishkin representation, with associated $(\varphi,\Gamma)$-module $D:=D_{\rig}^\dagger(V)$ and period line
\[
 D_{\cris}^- := \bigl((D/D^+)[1/t]\bigr)^{\Gamma=1,\varphi=\alpha},
\]
where $\alpha$ is the refined Frobenius eigenvalue on $V^-:=V/V^+$. Then Berger~\cite{Berger-exp} constructs a $\Lambda(\Gamma)$-linear map
\[
 \widetilde{\mathcal{L}}_{V,V^+}: H^1_{\Iw}(\Q_p,V^\ast(1)) \longrightarrow \Hom_{\Q_p}\!\bigl(D_{\cris}^-,\mathcal{H}(\Gamma)\bigr)
\]
whose evaluation at any non-zero vector in $D_{\cris}^-$ recovers the scalar-valued Perrin--Riou regulator~$\mathcal{L}_{V,V^+}$. The construction depends only on the $(\varphi,\Gamma)$-module~$D$ and the line~$D_{\cris}^-$, and is compatible with base change in the coefficient field.

In the finite-slope setting, analogous regulators in families have been constructed for Coleman and Hida families of modular forms and more generally over the eigencurve (see Hansen~\cite[\S1.2,\ \S4.1]{Hansen-eigencurve}). In each case, the key input is the existence of a line bundle of crystalline periods and the functoriality of the $(\varphi,\Gamma)$-module construction.

In our Rankin--Selberg situation over the eigenvariety neighbourhood $U$, the same formalism yields an $A$-linear, $\Lambda(\Gamma)$-linear vector-valued regulator for the family $(V_U,\mathscr{D}_U^+)$; this is closely related to the construction in~\cite[\S3]{HL24}.

\begin{lemma}[Vector-valued family regulator]\label{lem:vector-PR-family}
With notation as above, there exists an $A$-linear, $\Lambda(\Gamma)$-linear map
\[
 \widetilde{\mathcal{L}}_{V_U,V_U^+}: H^1_{\Iw}(\Q_p,V_U^\ast(1)) \longrightarrow
 \Hom_A\!\bigl(\mathscr{D}_{\cris,U}^-,\mathcal{H}(\Gamma)\widehat{\otimes}_{\Q_p}A\bigr),
\]
characterised by the following properties:
\begin{enumerate}[label=(\alph*)]
\item for each rigid point $x\in U$, base change along $A\to\kappa(x)$ identifies the fibre of $\widetilde{\mathcal{L}}_{V_U,V_U^+}$ at~$x$ with the vector-valued Perrin--Riou regulator $\widetilde{\mathcal{L}}_{V_x,V_x^+}$ attached to $(V_x,\mathscr{D}_x^+)$;
\item for each $x\in U$, evaluating $\widetilde{\mathcal{L}}_{V_x,V_x^+}$ at any non-zero vector in $D_{\cris}(V_x^-)^{\varphi=\alpha_x}$ recovers the scalar-valued map~$\mathcal{L}_{V_x,V_x^+}$ of Theorem~\ref{thm:PR-classical}.
\end{enumerate}
\end{lemma}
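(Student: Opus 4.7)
The plan is to construct $\widetilde{\mathcal{L}}_{V_U,V_U^+}$ as a family analogue of Berger's $(\varphi,\Gamma)$-module description of the scalar Perrin--Riou regulator of Theorem~\ref{thm:PR-classical}, working purely with the triangulated data $(\mathscr{D}_U,\mathscr{D}_U^+,\mathscr{D}_U^-)$ and the crystalline period line bundle $\mathscr{D}_{\cris,U}^-$ of Lemma~\ref{lem:Dcris-minus-line}, so that no actual $G_{\Q_p}$-stable subrepresentation at $x_0$ is ever required. Fibre compatibility will then follow from the KPX base-change formalism recorded in Proposition~\ref{prop:KPX-family} together with the corresponding single-point statements.

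First I would use Proposition~\ref{prop:KPX-family} to identify $H^1_{\Iw}(\Q_p,V_U^\ast(1))$ with the first cohomology of a $\psi$-complex of the dual family over the relative Robba ring $\mathcal{R}_A$, where $A := \cO(U)$. Applying $C^\bullet_{\psi}$ to the triangulation $0 \to \mathscr{D}_U^+ \to \mathscr{D}_U \to \mathscr{D}_U^- \to 0$ and using local Tate duality for relative $(\varphi,\Gamma)$-modules in the sense of~\cite{KPX}, I would then project each such Iwasawa class onto the Iwasawa cohomology of the Panchishkin quotient relevant to the big logarithm. Next I would define a family big logarithm
\[
 \mathrm{Log}\colon H^1_{\Iw}(\mathscr{D}_U^-) \longrightarrow \Hom_A\!\bigl(\mathscr{D}_{\cris,U}^-,\ \mathcal{H}(\Gamma)\widehat{\otimes}_{\Q_p} A\bigr)
\]
by mimicking Berger's Mellin-transform construction on the $\psi$-invariants, now working entirely over $\mathcal{R}_A$, and pairing the resulting distribution-valued output against the rank-one period line bundle $\mathscr{D}_{\cris,U}^-$. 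All ingredients (Herr-type complexes, Mellin transform, crystalline comparison) extend to the relative setting by~\cite{KPX,LiuTriangulation}. The regulator $\widetilde{\mathcal{L}}_{V_U,V_U^+}$ is then the composition of the projection with $\mathrm{Log}$.

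Property (a) is essentially automatic from the construction: each ingredient (the KPX cohomology complex, the triangulation-induced projection, the Berger-type big log, and the period line bundle $\mathscr{D}_{\cris,U}^-$) commutes with base change $A\to\kappa(x)$ by Proposition~\ref{prop:KPX-family}(b)--(c) and Lemma~\ref{lem:Dcris-minus-line}(b), so the specialisation of $\widetilde{\mathcal{L}}_{V_U,V_U^+}$ at any rigid point $x$ is precisely the vector-valued map $\widetilde{\mathcal{L}}_{V_x,V_x^+}$ produced by Berger's single-point construction applied to $(\mathscr{D}_x,\mathscr{D}_x^+)$. Property (b) is built into the vector-valued formulation: evaluating the $\Hom_A$-valued output at a nowhere-vanishing section $\eta_U$ of $\mathscr{D}_{\cris,U}^-$ recovers, fibre by fibre, the scalar-valued Perrin--Riou regulator $\mathcal{L}_{V_x,V_x^+}$ of Theorem~\ref{thm:PR-classical} evaluated at the specialisation of $\eta_U$.

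The main obstacle will be the construction of $\mathrm{Log}$ itself. Berger's single-point recipe passes through Frobenius structures on $p$-adic differential equations over the Robba ring, $\Gamma$-equivariant lifts, and an iterated Mellin transform, and one must verify that each of these operations extends uniformly to $\mathcal{R}_A$ and interacts correctly with the Panchishkin filtration supplied by Liu's global triangulation theorem. A further subtlety is to check that the characterising interpolation property is preserved in the family: verifying that at classical rigid points one actually recovers the correct Bloch--Kato dual exponential requires comparing the family-level Mellin transform with Berger's explicit single-point formula. This comparison has been carried out in closely related settings in~\cite{LZ20} and, in the ordinary Rankin--Selberg case, in~\cite[\S3]{HL24}; the task here is to adapt those arguments to our finite-slope triangulated setting over $U$.
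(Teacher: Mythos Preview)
Your proposal is correct and in fact more substantive than the paper's own argument, though it takes a genuinely different route. The paper's proof is essentially a gluing argument: it cites Berger for the pointwise vector-valued regulators $\widetilde{\mathcal{L}}_{V_x,V_x^+}$, observes via Proposition~\ref{prop:KPX-family} and Lemma~\ref{lem:Dcris-minus-line} that both source and target are finite projective with compatible base change, and then simply asserts that ``since $\widetilde{\mathcal{L}}_{V_x,V_x^+}$ varies analytically with $x$'' there is a unique $A$-linear map interpolating them. The analytic variation step is not really justified; the paper treats it as a black box.

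You instead propose a direct construction over the relative Robba ring $\mathcal{R}_A$: project Iwasawa cohomology through the triangulation, then build a family big logarithm $\mathrm{Log}$ by lifting Berger's Mellin-transform recipe to relative coefficients, and finally pair with the period line bundle. This is harder to execute (as you correctly flag, extending Berger's Frobenius-structure and Mellin-transform steps uniformly over $\mathcal{R}_A$ is the real content), but it would actually \emph{prove} the analytic variation that the paper merely asserts, and it makes clear that no $G_{\Q_p}$-stable subrepresentation is needed at non-ordinary points. In short: the paper interpolates pointwise maps and sweeps the gluing under the rug; you construct the family map from scratch, which is the honest way to fill that gap. Both approaches should yield the same regulator, and your identification of the main obstacle is accurate.
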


\begin{proof}
The existence of $\widetilde{\mathcal{L}}_{V_x,V_x^+}$ for each single fibre $(V_x,V_x^+)$ and its expression in terms of $(\varphi,\Gamma)$-modules are proved in~\cite{Berger-exp}. The construction depends only on $D_{\rig}^\dagger(V_x)$ and the crystalline period line $D_{\cris}(V_x^-)^{\varphi=\alpha_x}$, and is therefore compatible with base change in~$x$.

By Proposition~\ref{prop:KPX-family}, the local Iwasawa cohomology $H^1_{\Iw}(\Q_p,V_U^\ast(1))$ is a finite projective $A\widehat\otimes_{\Q_p}\Lambda(\Gamma)$-module whose formation commutes with base change in~$A$. Similarly, $\mathscr{D}_{\cris,U}^-$ is a line bundle on~$U$ whose fibres are the crystalline period lines of Lemma~\ref{lem:Dcris-minus-line}. Since $\widetilde{\mathcal{L}}_{V_x,V_x^+}$ varies analytically with~$x$ and respects the Iwasawa and $(\varphi,\Gamma)$-module structures, there is a unique $A$-linear, $\Lambda(\Gamma)$-linear map
\[
 \widetilde{\mathcal{L}}_{V_U,V_U^+}
\]
whose fibre at each $x$ is the classical vector-valued regulator $\widetilde{\mathcal{L}}_{V_x,V_x^+}$. Property~(b) is the fibrewise compatibility with Theorem~\ref{thm:PR-classical}, as in~\cite{Berger-exp}.
\end{proof}

We now obtain the scalar-valued family regulator by evaluating at the fixed crystalline period~$\eta_U$.

\begin{proposition}[Big logarithm for the family $(V_U,V_U^+)$]\label{prop:PR-family}
With notation as above, define
\[
 \mathcal{L}_{V_U,V_U^+}: H^1_{\Iw}(\Q_p,V_U^\ast(1)) \longrightarrow \mathcal{H}(\Gamma)\widehat{\otimes}_{\Q_p}\cO(U)
\]
by
\[
 \mathcal{L}_{V_U,V_U^+}(z) := \widetilde{\mathcal{L}}_{V_U,V_U^+}(z)(\eta_U),\qquad z\in H^1_{\Iw}(\Q_p,V_U^\ast(1)).
\]
Then:
\begin{enumerate}[label=(\roman*)]
\item $\mathcal{L}_{V_U,V_U^+}$ is $\bigl(\mathcal{H}(\Gamma)\widehat{\otimes}_{\Q_p}\cO(U)\bigr)$-linear;
\item for each rigid point $x\in U$ and each finite-order character $\chi:\Gamma\to\overline{\Q}_p^\times$ in the Panchishkin range, the specialisation of $\mathcal{L}_{V_U,V_U^+}$ at $(x,\chi)$ coincides, up to a non-zero scalar depending only on the choice of $\eta_U$, with the classical Perrin--Riou regulator of Theorem~\ref{thm:PR-classical} for the fibre~$(V_x,V_x^+)$.
\end{enumerate}
In particular, after fixing the normalisation of $\eta_U$ once and for all, the map $\mathcal{L}_{V_U,V_U^+}$ is uniquely determined and interpolates the local Bloch--Kato maps at all classical points of~$U$.
\end{proposition}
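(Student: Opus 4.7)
The plan is to reduce the proposition to the two properties of the vector-valued family regulator $\widetilde{\mathcal{L}}_{V_U,V_U^+}$ already established in Lemma~\ref{lem:vector-PR-family}, combined with the fact that the chosen section $\eta_U$ is a nowhere-vanishing global section of the line bundle $\mathscr{D}_{\cris,U}^-$ from Lemma~\ref{lem:Dcris-minus-line}. With the vector-valued regulator in hand, the definition $\mathcal{L}_{V_U,V_U^+}(z) := \widetilde{\mathcal{L}}_{V_U,V_U^+}(z)(\eta_U)$ is merely a contraction, and the content of the proposition amounts to checking that this contraction is compatible with the two linear structures on the source and with fibrewise specialisation at every $x\in U$.

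For part~(i), I would argue that $\widetilde{\mathcal{L}}_{V_U,V_U^+}$ is $A$-linear and $\Lambda(\Gamma)$-linear by Lemma~\ref{lem:vector-PR-family}, and hence extends by scalar extension along $\Lambda(\Gamma)\to\mathcal{H}(\Gamma)$ to a morphism of $\bigl(\mathcal{H}(\Gamma)\widehat{\otimes}_{\Q_p}A\bigr)$-modules. Here one uses that $H^1_{\Iw}(\Q_p,V_U^\ast(1))$ is a finite projective $A\widehat{\otimes}_{\Q_p}\Lambda(\Gamma)$-module (Proposition~\ref{prop:KPX-family}) and that the target $\Hom_A\bigl(\mathscr{D}_{\cris,U}^-,\mathcal{H}(\Gamma)\widehat{\otimes}_{\Q_p}A\bigr)$ is naturally a module over the distribution algebra via post-composition in the second slot. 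Evaluating a $\bigl(\mathcal{H}(\Gamma)\widehat{\otimes}_{\Q_p}A\bigr)$-linear morphism at the section $\eta_U$, which is a ``scalar'' from the point of view of this module structure, preserves the linearity and produces the desired $\mathcal{L}_{V_U,V_U^+}$.

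For part~(ii), I would fix a rigid point $x\in U$ and use that $\eta_U$, being nowhere vanishing, specialises to a non-zero vector $\eta_U(x)\in D_{\cris}(V_x^-)^{\varphi=\alpha_x}$. Base change along $A\to\kappa(x)$ commutes with the evaluation at $\eta_U$ because it is compatible both with the Iwasawa cohomology complexes (Proposition~\ref{prop:KPX-family}) and with the formation of the period line (Lemma~\ref{lem:Dcris-minus-line}); applying Lemma~\ref{lem:vector-PR-family}(a) then identifies the fibre of $\mathcal{L}_{V_U,V_U^+}$ at $x$ with $\widetilde{\mathcal{L}}_{V_x,V_x^+}(\cdot)(\eta_U(x))$, which by Lemma~\ref{lem:vector-PR-family}(b) is a non-zero scalar multiple of the classical $\mathcal{L}_{V_x,V_x^+}$ of Theorem~\ref{thm:PR-classical}. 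Further specialisation at a finite-order character $\chi$ in the Panchishkin range then reproduces the Bloch--Kato dual exponential or logarithm by the commutative diagram of Theorem~\ref{thm:PR-classical}. The uniqueness statement, once $\eta_U$ is fixed, follows from the Zariski density of classical cuspidal points in $U$ provided by Lemma~\ref{lem:good-U} together with the reducedness of $\cO(U)$.

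The one step that is not purely formal, and which I expect to require the most care, is the commutation of two independent specialisations inside $\mathcal{H}(\Gamma)\widehat{\otimes}_{\Q_p}\cO(U)$: rigid-analytic base change at $x\in U$ on the affinoid factor, and character evaluation at $\chi\in\Gamma$ on the distribution factor. This is handled by observing that each operation individually is compatible with the construction of $\widetilde{\mathcal{L}}_{V_U,V_U^+}$ from Berger's $(\varphi,\Gamma)$-module description~\cite{Berger-exp}, so that their composition is well-defined and yields the expected Bloch--Kato value. The key enabling fact is that Berger's regulator is built out of the pair $(\mathscr{D}_U,\mathscr{D}_{\cris,U}^-)$, both of which behave functorially under base change in the coefficient ring, while the $\mathcal{H}(\Gamma)$-factor is introduced at the very end and is therefore automatically compatible with evaluation at $\chi$.
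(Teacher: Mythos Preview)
Your proposal is correct and follows essentially the same route as the paper: both arguments reduce part~(i) to the $A$- and $\Lambda(\Gamma)$-linearity of the vector-valued regulator from Lemma~\ref{lem:vector-PR-family} together with the observation that evaluation at the fixed section $\eta_U$ is $\cO(U)$-linear and $\mathcal{H}(\Gamma)$-equivariant, and handle part~(ii) by base-changing along $A\to\kappa(x)$ (using Lemma~\ref{lem:vector-PR-family}(a) and the KPX base-change results), then evaluating at the non-zero specialisation $\eta_U(x)$ and invoking Theorem~\ref{thm:PR-classical}. Your additional remarks on Zariski density for uniqueness and on the commutation of the two specialisations are more detailed than the paper's treatment but not essentially different; note, however, that the uniqueness claim is immediate from the defining formula once $\eta_U$ is fixed, so the density argument is not actually needed there.
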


\begin{proof}
By Lemma~\ref{lem:vector-PR-family}, the vector-valued map $\widetilde{\mathcal{L}}_{V_U,V_U^+}$ exists and is $A$-linear and $\Lambda(\Gamma)$-linear. Evaluating at the fixed nowhere-vanishing section $\eta_U$ gives the scalar-valued map~$\mathcal{L}_{V_U,V_U^+}$. Since the action of $\mathcal{H}(\Gamma)$ on $H^1_{\Iw}(\Q_p,V_U^\ast(1))$ is by $\mathcal{H}(\Gamma)\widehat{\otimes}_{\Q_p}\cO(U)$-linear endomorphisms and evaluation at~$\eta_U$ is $\cO(U)$-linear and $\mathcal{H}(\Gamma)$-equivariant, $\mathcal{L}_{V_U,V_U^+}$ is $\mathcal{H}(\Gamma)\widehat{\otimes}_{\Q_p}\cO(U)$-linear, proving~(i).

For a rigid point $x\in U$, base change along $A\to\kappa(x)$ identifies the fibre of $\widetilde{\mathcal{L}}_{V_U,V_U^+}$ at $x$ with the classical vector-valued Perrin--Riou map $\widetilde{\mathcal{L}}_{V_x,V_x^+}$, by Lemma~\ref{lem:vector-PR-family}(a) together with Theorem~\ref{thm:KPX}. Evaluating at the specialisation $\eta_x$ of $\eta_U$ gives the scalar-valued map~$\mathcal{L}_{V_x,V_x^+}$. By Theorem~\ref{thm:PR-classical}, its specialisations at characters $\chi$ in the Panchishkin range satisfy the stated interpolation property with the Bloch--Kato dual exponentials and logarithms. The scalar factor arises from the choice of the basis $\eta_U$ of the rank one line $\mathscr{D}_{\cris,U}^-$. Since $\eta_U$ is fixed once and for all, each such factor is non-zero and depends only on this normalisation.
\end{proof}

\begin{remark}
The construction above is formally analogous to Hansen's family-valued regulator on the eigencurve~\cite[Thm.~1.2.2]{Hansen-eigencurve}, where the role of $\mathscr{D}_{\cris,U}^-$ is played by a line bundle of crystalline periods and $\widetilde{\mathcal{L}}_{V_U,V_U^+}$ is the ``Log'' map. The only difference is that in our setting $V_U$ is a four-dimensional Rankin--Selberg representation with a higher-rank Panchishkin local condition; the $(\varphi,\Gamma)$-module arguments are identical.
\end{remark}

\subsubsection{Beilinson--Flach Euler systems and the universal deformation setting}

We now recall the classical Beilinson--Flach Euler system and formulate the conjectural extension to the half--ordinary universal deformation family. This is where we depart from the claims originally made in~\cite{HL24}: the existence of the universal Euler system in our setting is not presently known, and we make it a separate conjecture.

\begin{theorem}[Rankin--Selberg Beilinson--Flach Euler system]\label{thm:BF-classical}
Let $f,g$ be classical cuspidal newforms of weights at least~$2$ and levels prime to~$p$, and let $V(f),V(g)$ be their associated two-dimensional $p$-adic Galois representations. Then there exists an Euler system of Beilinson--Flach classes
\[
 \bigl\{\mathcal{BF}_m(f,g)\bigr\}_{m\ge 1} \subset H^1\bigl(\Q(\mu_m),V(f)^\ast(1)\otimes V(g)^\ast(1)\bigr)
\]
satisfying the usual norm-compatibility relations away from~$p$, and whose local components at~$p$ lie in the Bloch--Kato finite subspaces.
\end{theorem}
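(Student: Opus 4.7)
The plan is to follow the construction of Beilinson--Flach classes due to Lei--Loeffler--Zerbes and Kings--Loeffler--Zerbes~\cite{LLZ14,KLZ-RE}, adapted to the newform setting. First I would recall Beilinson's motivic elements: for each level $N$ and each $m\ge 1$, there is a natural family of classes in the motivic cohomology of the product $Y_1(N)^2$ (or of $Y(m,mN)^2$) obtained as cup products of suitable Siegel units $g_{a,b}$. Passing to $p$-adic \'etale realisation yields classes in $H^3_{\mathrm{\acute et}}(Y_1(N)^2_{\Q(\mu_m)},\Z_p(2))$, and then composition with the Hochschild--Serre edge map and with the K\"unneth/Clebsch--Gordan projectors gives classes in $H^1(\Q(\mu_m),H^1_{\mathrm{\acute et}}(Y_1(N)_{\overline\Q},\Z_p(1))^{\otimes 2})$. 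Twisting appropriately and dualising via Poincar\'e duality puts them in $H^1(\Q(\mu_m),V(f)^\ast(1)\otimes V(g)^\ast(1))$ after projecting to the $(f,g)$-isotypic component by applying the Hecke eigenprojector associated to $f\otimes g$.

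The second step is to verify the Euler system norm relations at primes $\ell\nmid Np m$. This is the main technical point and the principal obstacle. The key input is the \emph{tame norm compatibility} of Siegel units together with the explicit action of the Hecke operators at~$\ell$ on the image of the cup product. Concretely, I would compute the trace map along the covering $Y_1(N\ell)\to Y_1(N)$ applied to $\mathcal{BF}_{m\ell}$ using the distribution relation for Siegel units~\cite[\S5]{LLZ14}; after projecting to the $(f,g)$-component, this trace reduces to multiplication by the local Euler factor $P_\ell(f\otimes g,\ell^{-1}\sigma_\ell^{-1})$, giving the required norm-compatibility. The delicate part is bookkeeping: one needs to separate the two independent ``twisting variables'' corresponding to the two modular factors, to match the Frobenius action at~$\ell$ with the Hecke eigenvalues $a_\ell(f),a_\ell(g)$ correctly, and to handle the exceptional cases where $\ell\mid m$ (where the relation becomes a norm compatibility under cyclotomic twist).

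For the local behaviour at~$p$, I would invoke the fact that Beilinson--Flach classes are, by construction, images of motivic cohomology classes on a smooth variety with good reduction at~$p$ (one works at level prime to~$p$, so $Y_1(N)$ has good reduction at~$p$). By the standard comparison between motivic and syntomic/\'etale cohomology and the fact that cycle classes of integral motivic elements land in the Bloch--Kato $H^1_f$ via the crystalline/syntomic regulator, the localisation of $\mathcal{BF}_m(f,g)$ at each prime above~$p$ lies in $H^1_f(\Q_p(\mu_m),V(f)^\ast(1)\otimes V(g)^\ast(1))$; see~\cite[Prop.~3.3.3]{KLZ-RE} for a precise statement in this form.

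Finally, the existence of a coherent system as $m$ varies along the tower of cyclotomic integers coprime to~$Np$ follows by assembling the above classes and invoking the norm relations proved in step two; the absolute irreducibility of $\bar\rho_1,\bar\rho_2$ ensures that the Hecke projector to the $(f,g)$-component is well-defined over a suitable finite extension of $\Z_p$, so the resulting classes can be integrally normalised. The Euler system axioms (norm compatibility at good primes and finite-at-$p$ local conditions) are then the content of~\cite[Thm.~3.5.9]{KLZ-RE}, which is exactly the statement of Theorem~\ref{thm:BF-classical}.
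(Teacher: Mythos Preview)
Your proposal is correct and takes essentially the same approach as the paper: both treat this theorem as a citation of the Lei--Loeffler--Zerbes and Kings--Loeffler--Zerbes constructions, and your sketch (Siegel units $\to$ \'etale realisation $\to$ Hochschild--Serre $\to$ Hecke projection, with norm relations from the distribution relations for Siegel units and the local condition at $p$ from the motivic origin) is exactly an unpacking of what those references do. The paper's own ``proof'' is purely a list of references to \cite[\S\S3.3--3.5, \S6.8]{LLZ14} and \cite[Thm.~B]{KLZ-RE}, so your more detailed outline is entirely compatible with it; the only minor discrepancy is in the precise pinpoint citations (the paper points to \cite[Thm.~6.8.4, Thm.~6.8.6]{LLZ14} for the Euler-system and local-at-$p$ properties rather than to \cite{KLZ-RE}).
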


\begin{proof}[References]
The construction of the generalised Beilinson--Flach elements $\mathcal{BF}_{m,N,a}^{[j]}$ and their norm relations in $m$ and $N$ is carried out in~\cite[\S\S3.3--3.5]{LLZ14}. Their images in Galois cohomology give classes $\mathcal{BF}_m(f,g)\in H^1\bigl(\Q(\mu_m),V(f)^\ast(1)\otimes V(g)^\ast(1)\bigr)$ which satisfy the Euler-system norm relations and have the stated local properties at~$p$; see~\cite[\S6.8, Thm.~6.8.4, Thm.~6.8.6]{LLZ14}. For the Rankin--Eisenstein formalism and the relation with Hida's $p$-adic Rankin--Selberg $L$-function via Perrin--Riou's logarithm, see~\cite[Thm.~B]{KLZ-RE}.
\end{proof}

Motivated by Loeffler's general Conjecture~2.8 in~\cite{LoefflerUD} and by existing Euler systems in Hida-family settings (for instance~\cite{LLZ14,KLZ-RE}), one expects that these classes should glue in Iwasawa cohomology over a suitable universal deformation family. In the present half--ordinary universal deformation setting, this expectation can be formulated as follows.

\begin{conjecture}[Euler system over the half--ordinary universal deformation family]\label{conj:ES}
Let $(V_U,\mathscr{D}^+_U)$ be as above, with $U$ small enough so that all classical specialisations lie in the interpolation range considered in~\cite{LoefflerUD}. Then there exists a global Iwasawa cohomology class
\[
 \BF_U \in H^1_{\Iw}(\Q,V_U^\ast(1))
\]
such that for every classical point $x\in U$ corresponding to a pair of modular forms $(f_x,g_x)$, the specialisation of $\BF_U$ at $x$ is the Beilinson--Flach class
\[
 \BF_\infty(f_x,g_x) \in H^1_{\Iw}\bigl(\Q,V(f_x)^\ast(1)\otimes V(g_x)^\ast(1)\bigr)
\]
constructed in~\cite{LLZ14}, normalised compatibly with the explicit reciprocity laws of~\cite{KLZ-RE}.
\end{conjecture}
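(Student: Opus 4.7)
The plan is to build $\BF_U$ in two stages: first over the ordinary locus $U^{\ord}\subset U$, where the universal deformation family pulls back from the half--ordinary Hida setting in which Beilinson--Flach elements have already been interpolated, and then to propagate the class across the full finite--slope neighbourhood by invoking the Zariski density of $U^{\ord}$ (Section~\ref{subsec:U-vs-R}) together with the rigidity of Iwasawa cohomology in relative $(\varphi,\Gamma)$-module families supplied by Proposition~\ref{prop:KPX-family}.

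\textbf{Step 1: construction over the ordinary locus.} Under the isomorphism $\iota\colon U^{\ord}\xrightarrow{\sim} U_R\subset\Spf(R)^{\rig}$ of Section~\ref{subsec:U-vs-R}, the restriction $(V_U,V_U^+)\big|_{U^{\ord}}$ is identified with the base change of the universal family $(V,V^+)$ over $R=R_1\widehat\otimes_{\cO}R_2$. For each tame conductor $m$, the classes $\BF_m(f,g)$ of Theorem~\ref{thm:BF-classical} have already been promoted in the Hida direction of the first factor by Lei--Loeffler--Zerbes~\cite{LLZ14} and Kings--Loeffler--Zerbes~\cite{KLZ-RE}. Invoking the Böckle--Emerton $R=\mathbb{T}$ theorem~\cite[Thm.~3.4]{LoefflerUD} to identify $R_2$ with a localised Hecke algebra on $p$-adic cusp forms, one would extend the étale Rankin--Eisenstein construction to this two--variable setting, obtaining norm--compatible classes in $H^1_{\Iw}\bigl(\Q(\mu_m),V^\ast(1)\bigr)$ over $R$. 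Taking the limit $m\to p^\infty$ and pulling back along $\iota$ yields a class $\BF_{U^{\ord}}\in H^1_{\Iw}\bigl(\Q,V^\ast_{U^{\ord}}(1)\bigr)$ whose specialisations at classical ordinary points recover the classes $\BF_\infty(f_x,g_x)$ of~\cite{LLZ14}.

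\textbf{Step 2: extension to all of $U$.} Here one would upgrade the local perfectness and base--change statement of Proposition~\ref{prop:KPX-family} to its global analogue for $H^1_{\Iw}(\Q,V_U^\ast(1))$ using the tame level--$1$ hypothesis to control ramification away from~$p$. After shrinking $U$, this global Iwasawa cohomology module would be a torsion--free coherent $\cO(U)\widehat\otimes\Lambda(\Gamma)$--module whose formation commutes with flat base change, so that restriction to the Zariski--dense admissible open $U^{\ord}$ is injective. Consequently $\BF_{U^{\ord}}$ admits at most one extension to a class on $U$; the existence of this extension would follow from a Hartogs--type gluing argument on the smooth affinoid $U$ (Lemma~\ref{lem:good-U}(i)), using the continuous variation of the $U_p$--Euler factors across the slope boundary to match the norm relations. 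The specialisation at any classical non--ordinary $x\in U$ is then forced to equal $\BF_\infty(f_x,g_x)$ by uniqueness combined with the density of ordinary classical points and the finiteness of $H^1_{\Iw}$ modules over the residue field.

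\textbf{Main obstacle.} The fundamental difficulty is not the algebraic extension of Step~2 but the construction itself in Step~1. Existing Beilinson--Flach machines live on products of the Coleman--Mazur or parabolic eigenvariety and are built from étale motivic cohomology of modular curves using Siegel units and the Rankin--Eisenstein formalism; at the universal deformation level the Galois representation $V_2$ is only given abstractly, and bridging the gap requires an integral étale Rankin--Eisenstein machinery over $R_2$ itself that controls simultaneously the $\Lambda$--integral structure, the variation of the tame conductor, and the local Panchishkin condition in a genuinely two--parameter deformation family. A subsidiary technical hurdle is constructing the global Iwasawa cohomology complex of $V_U^\ast(1)$ with the same perfectness and base--change properties enjoyed locally by the KPX complexes, which in the finite--slope regime outside the ordinary locus is not available in the literature in the precise form needed. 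Resolving these two issues—integral interpolation of Rankin--Eisenstein classes over the unrestricted deformation ring, and a global analogue of~\cite[Thm.~4.4.3]{KPX} for $V_U^\ast(1)$—would suffice to promote Conjecture~\ref{conj:ES} to a theorem.
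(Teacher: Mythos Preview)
The statement you are attempting to prove is labelled a \emph{conjecture} in the paper and is explicitly left open; the construction of $L_p^{\mathrm{fs}}$ is conditional on it, and the paper supplies no proof. There is therefore no ``paper's own proof'' to compare your proposal against --- what you have written is a research strategy for an open problem.

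Viewed as such, Step~2 has a genuine gap that is more serious than you indicate. You propose to extend $\BF_{U^{\ord}}$ to all of $U$ by a Hartogs-type argument, but Hartogs extension requires the complement to have codimension $\ge 2$. Here the base point $x_0$ is non-ordinary by hypothesis, so $x_0\in U\setminus U^{\ord}$; moreover, since the $U_p$-slope is locally constant on the eigencurve, an entire neighbourhood of $x_0$ in the first factor consists of positive-slope points, so $U^{\ord}$ is plausibly empty, or at best supported on components of $U$ not containing $x_0$. Zariski density of $U^{\ord}$ together with torsion-freeness would give uniqueness of an extension but not existence; the phrase ``continuous variation of $U_p$-Euler factors across the slope boundary'' does not name an actual mechanism for producing a cohomology class at non-ordinary points from classes at ordinary ones. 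Injectivity of restriction to a dense open is the easy direction; surjectivity is precisely what is at stake.

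The route taken in the literature for Beilinson--Flach elements over finite-slope families (e.g.\ the Loeffler--Zerbes construction of Rankin--Eisenstein classes in Coleman families) works directly over the finite-slope locus via \'etale cohomology with coefficients in overconvergent distribution sheaves, and never passes through the ordinary locus. Your correctly identified Step~1 obstacle --- lifting the Rankin--Eisenstein machine to the unrestricted deformation ring $R_2$ rather than to an eigenvariety --- is exactly why the paper states Conjecture~\ref{conj:ES} rather than a theorem. But even if Step~1 were granted, the two-stage ``ordinary-then-extend'' architecture would not reach $x_0$, so the strategy as written cannot succeed.
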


\begin{remark}
In the setting of two Hida families, Euler systems of Beilinson--Flach elements satisfying such compatibility properties are constructed in~\cite{LLZ14,KLZ-RE}, and they play a central role in the ordinary universal Rankin--Selberg $p$-adic $L$-functions. In the half--ordinary universal deformation setting considered here (ordinary in the first factor, unrestricted in the second), the existence of a class $\BF_U$ as in Conjecture~\ref{conj:ES} is still open; the analytic constructions in~\cite{HL24} do not assume or prove such an Euler system. Thus our construction of $L_p^{\mathrm{fs}}$ will be conditional on Conjecture~\ref{conj:ES}.
\end{remark}

\subsubsection{Normalisations, definition of $L_p^{\mathrm{fs}}$, and interpolation}

We now combine the family regulator of Proposition~\ref{prop:PR-family} with the conjectural Beilinson--Flach Euler system of Conjecture~\ref{conj:ES}. Before doing so, we fix once and for all the normalisations of the local regulator, the Beilinson--Flach classes, and the $p$-adic periods, so that the explicit reciprocity laws of~\cite{KLZ-RE,LLZ14} apply in the expected form.

\paragraph{Normalisation conventions.}

\begin{enumerate}[label=(N\arabic*)]
\item For each classical pair $(f,g)$ occurring as a specialisation of $(V_U,\mathscr{D}_U^+)$, we denote by
\[
 \BF_\infty(f,g)\in H^1_{\Iw}\bigl(\Q,V(f)^\ast(1)\otimes V(g)^\ast(1)\bigr)
\]
the Beilinson--Flach Iwasawa cohomology class constructed in~\cite{LLZ14}, with the normalisation used in~\cite{KLZ-RE}. Conjecture~\ref{conj:ES} asserts the existence of a global class
\[
 \BF_U \in H^1_{\Iw}(\Q,V_U^\ast(1))
\]
whose specialisations at classical points coincide with these classes.
\item The family regulator
\[
 \mathcal{L}_{V_U,V_U^+}: H^1_{\Iw}(\Q_p,V_U^\ast(1)) \longrightarrow \mathcal{H}(\Gamma)\widehat{\otimes}_{\Q_p}\cO(U)
\]
constructed in Proposition~\ref{prop:PR-family} depends on the choice of basis~$\eta_U$ of~$\mathscr{D}_{\cris,U}^-$. Since $\mathscr{D}_{\cris,U}^-$ is a line bundle, rescaling~$\eta_U$ by a unit in~$\cO(U)^\times$ rescales $\mathcal{L}_{V_U,V_U^+}$ by the same unit. We fix $\eta_U$ once and for all so that for every classical point $x\in U$ the induced local regulator
\[
 \mathcal{L}_{V_x,V_x^+}: H^1_{\Iw}(\Q_p,V_x^\ast(1)) \longrightarrow \mathcal{H}(\Gamma)
\]
agrees with the normalisation used in the explicit reciprocity laws of~\cite{KLZ-RE,LLZ14}.
\item For each classical pair $(f_x,g_x)$ we fix complex periods $\Omega_\infty(f_x,g_x,\pm)$ and $p$-adic periods $\Omega_p(f_x,g_x,\pm)$ as in~\cite{KLZ-RE}. These periods are normalised so that the Rankin--Selberg Beilinson--Flach class $\BF_\infty(f_x,g_x)$ and the Perrin--Riou regulator $\mathcal{L}_{V_x,V_x^+}$ are related to the complex Rankin--Selberg $L$-values by the explicit reciprocity laws of~\cite[Thm.~B]{KLZ-RE} and~\cite{LLZ14}, with no additional non-zero constants other than the Euler factor at~$p$ and the chosen periods.
\end{enumerate}

With these conventions, for every classical pair $(f_x,g_x)$ and every critical integer $s$ in Deligne's sense the composition
\[
 H^1_{\Iw}(\Q,V_x^\ast(1)) \xrightarrow{\ \mathrm{loc}_p\ } H^1_{\Iw}(\Q_p,V_x^\ast(1))
 \xrightarrow{\ \mathcal{L}_{V_x,V_x^+}\ } \mathcal{H}(\Gamma) \xrightarrow{\ \mathrm{ev}_s\ } \Q_p
\]
is identified exactly with
\[
 \frac{E_p(f_x,g_x,s)}{\Omega_p(f_x,g_x,\pm)}\cdot\frac{L^{(p)}(f_x\otimes g_x,s)}{(2\pi i)^{2s}},
\]
where $E_p(f_x,g_x,s)$ is the local Euler factor at $p$ defined in~\cite[Def.~3.4]{HL24}.

Recall that $\mathscr{W}$ denotes the cyclotomic weight space and that there is a canonical identification
\[
 \mathcal{H}(\Gamma)\widehat{\otimes}_{\Q_p}\cO(U) \cong \cO(U\times\mathscr{W}),
\]
via the Mellin transform.

\begin{definition}[Finite--slope universal Rankin--Selberg $p$-adic $L$-function]\label{def:Lfs}
Assume Conjecture~\ref{conj:ES} and fix a class $\BF_U\in H^1_{\Iw}(\Q,V_U^\ast(1))$ as above. We define
\[
 L_p^{\mathrm{fs}} := \mathcal{L}_{V_U,V_U^+}(\BF_U) \in \mathcal{H}(\Gamma)\widehat{\otimes}_{\Q_p}\cO(U)\cong \cO(U\times\mathscr{W}).
\]
Thus $L_p^{\mathrm{fs}}$ is a rigid-analytic function on $U\times\mathscr{W}$ whose value at a classical point $(x,\kappa)$ is obtained by applying the local Perrin--Riou regulator at~$x$ to the specialised Beilinson--Flach class $\BF_\infty(f_x,g_x)$.
\end{definition}

\begin{proposition}[Interpolation at classical points]\label{prop:interp-fs}
Assume Conjecture~\ref{conj:ES}. Let $(x,\kappa)\in U\times\mathscr{W}$ be a classical point, where $x$ corresponds to a pair of eigenforms $(f_x,g_x)$ and $\kappa$ corresponds to a cyclotomic character of weight $s\in\Z$ which is Deligne--critical for $L(f_x\otimes g_x,s)$. Then
\[
 L_p^{\mathrm{fs}}(x,\kappa) = \frac{E_p(f_x,g_x,s)}{\Omega_p(f_x,g_x,\pm)}\cdot\frac{L^{(p)}(f_x\otimes g_x,s)}{(2\pi i)^{2s}},
\]
where $E_p(f_x,g_x,s)$ is the explicit Euler factor at $p$ of~\cite[Def.~3.4]{HL24}, $\Omega_p(f_x,g_x,\pm)$ is the $p$-adic period fixed above, and $L^{(p)}(f_x\otimes g_x,s)$ is the complex Rankin--Selberg $L$-function with the Euler factor at $p$ omitted.
\end{proposition}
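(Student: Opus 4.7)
The plan is to deduce the interpolation formula from the classical explicit reciprocity law of Kings--Loeffler--Zerbes~\cite{KLZ-RE} (building on~\cite{LLZ14}) by showing that the specialisation of $L_p^{\mathrm{fs}}$ at a classical point $(x,\kappa)$ factors through the classical Perrin--Riou regulator applied to the classical Beilinson--Flach Iwasawa class. In other words, the task is to verify that base change in $x$ commutes with the three operations (\emph{localisation at $p$}, \emph{regulator}, \emph{evaluation at $\kappa$}) that together define $L_p^{\mathrm{fs}}(x,\kappa)$, and then to invoke the already-known scalar formula in the classical case.

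For the first step, Conjecture~\ref{conj:ES} gives the specialisation compatibility of the Euler system class: the image of $\BF_U$ under $H^1_{\Iw}(\Q,V_U^\ast(1))\to H^1_{\Iw}(\Q,V_x^\ast(1))$ is $\BF_\infty(f_x,g_x)$. Proposition~\ref{prop:PR-family}(ii) supplies the corresponding compatibility for the regulator: the fibre at~$x$ of $\mathcal{L}_{V_U,V_U^+}$ is the scalar-valued map $\mathcal{L}_{V_x,V_x^+}$ of Theorem~\ref{thm:PR-classical}, normalised by~(N2) to match the convention of~\cite{KLZ-RE,LLZ14}. Localisation at $p$ and evaluation at $\kappa$ are both $\cO(U)$-linear, so they commute with the specialisation $\cO(U)\to\kappa(x)$. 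Putting these four compatibilities together shows that $L_p^{\mathrm{fs}}(x,\kappa)$ equals $\mathrm{ev}_\kappa\circ\mathcal{L}_{V_x,V_x^+}\circ\mathrm{loc}_p(\BF_\infty(f_x,g_x))$. For the second step, the classical explicit reciprocity law identifies this composite with the stated ratio of $p$-adic Euler factor, $p$-adic period and motivic $L$-value: this is exactly~\cite[Thm.~B]{KLZ-RE} together with the Rankin--Eisenstein unfolding and the weight-variable interpolation formula of~\cite[\S6.8]{LLZ14}, under the normalisations~(N1)--(N3). The match of the Euler factor $E_p(f_x,g_x,s)$ with the one of~\cite[Def.~3.4]{HL24} is guaranteed by Proposition~\ref{prop:Dplus-fibres}, which identifies the specialisation of $\mathscr{D}^+$ at $x$ with the half--ordinary Panchishkin submodule.

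The main obstacle I expect is not in the commutative diagram, which is formal once one has Proposition~\ref{prop:PR-family} and Conjecture~\ref{conj:ES}, but in the careful bookkeeping of the normalisations~(N1)--(N3) across the whole family. Specifically, one must choose the basis~$\eta_U$ of the line bundle~$\mathscr{D}_{\cris,U}^-$ so that its specialisation at every classical point agrees with the basis implicitly fixed by the $p$-adic periods $\Omega_p(f_x,g_x,\pm)$ of~\cite{KLZ-RE}; this is possible after shrinking~$U$ because $\mathscr{D}_{\cris,U}^-$ has rank one and the classical $p$-adic periods vary rigid-analytically in~$x$. A second, genuinely delicate point is the range of critical~$s$ handled directly by~\cite{KLZ-RE,LLZ14}, which covers one of the two Deligne--critical strips; the remaining critical values will require either combining the explicit reciprocity law with the functional equation of Beilinson--Flach elements, or invoking the Euler system attached to the swapped pair $(g_x,f_x)$. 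Handling both signs~$\pm$ uniformly, including the dependence on the parity of~$s$ relative to the weights $(k_1(x),k_2(x))$ as $x$ varies over $U$, is the fiddliest part of the bookkeeping and the place where a further shrinking of $U$ may be necessary.
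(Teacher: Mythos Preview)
Your proposal is correct and follows essentially the same route as the paper's own proof: specialise $\BF_U$ at $x$ via Conjecture~\ref{conj:ES}, specialise the regulator via Proposition~\ref{prop:PR-family}(ii), and then invoke the explicit reciprocity laws of~\cite{KLZ-RE,LLZ14} under the normalisations~(N1)--(N3). Your additional remarks on the choice of $\eta_U$ and on the range of critical~$s$ are well taken, but the paper absorbs both issues into the normalisation conventions~(N1)--(N3) rather than addressing them in the proof itself.
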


\begin{proof}
Fix a classical point $(x,\kappa)$ as in the statement, and let $(f_x,g_x)$ and $s$ be as above. By the definition of $\BF_U$ in Conjecture~\ref{conj:ES}, the specialisation of $\BF_U$ at $x$ is the Beilinson--Flach class $\BF_\infty(f_x,g_x)$. By the compatibility of $\mathcal{L}_{V_U,V_U^+}$ with specialisation (Proposition~\ref{prop:PR-family}), we have
\[
 L_p^{\mathrm{fs}}(x,\kappa) = \bigl(\mathcal{L}_{V_x,V_x^+}(\BF_\infty(f_x,g_x))\bigr)(\kappa).
\]
On the other hand, the explicit reciprocity laws of Kings--Loeffler--Zerbes and Lei--Loeffler--Zerbes~\cite[Thm.~B]{KLZ-RE},~\cite{LLZ14}, together with the normalisations of the regulator and periods fixed above, identify this value exactly with
\[
 \frac{E_p(f_x,g_x,s)}{\Omega_p(f_x,g_x,\pm)}\cdot\frac{L^{(p)}(f_x\otimes g_x,s)}{(2\pi i)^{2s}}.
\]
This is the claimed formula.
\end{proof}

\subsection{Proof of Theorem~\ref{thm:main}}

We now deduce Theorem~\ref{thm:main} from the constructions in the previous steps.

Under hypothesis~(H1), we have, by Step~1 (and in particular~\cite[Ex.~3.17]{LoefflerUD}), a global half--ordinary Panchishkin family $(R,V,V^+)$ whose rigid fibre $X=\Spf(R)^{\rig}$ maps, on the ordinary locus, to the eigenvariety $E$. After shrinking around~$x_0$, we obtain an affinoid neighbourhood $U\subset E$ which is smooth, finite \'etale over weight space, and has Zariski-dense classical cuspidal locus, as in Lemma~\ref{lem:good-U}. On the ordinary locus $U^{\ord}\subset U$ we have an identification with an affinoid subdomain of $\Spf(R)^{\rig}$ as explained in~\S\ref{subsec:U-vs-R}.

By hypotheses~(H2) and~(H3), the local Galois representation at~$p$ attached to $x_0$ is crystalline with small slope and satisfies the Panchishkin inequalities; hence $x_0$ is a regular non-critical point in the sense of~\cite[Def.~5.29]{LiuTriangulation}. Liu's global triangulation theorem then yields, after possibly shrinking~$U$, a triangulation of the relative $(\varphi,\Gamma)$-module $\mathscr{D} = D_{\rig}^\dagger\bigl(V_U\big|_{G_{\Q_p}}\bigr)$ over~$U$; in particular we obtain a saturated submodule $\mathscr{D}^+\subset\mathscr{D}$ whose fibres coincide with the Panchishkin local condition at classical points (Proposition~\ref{prop:Dplus-fibres}). The finiteness and base-change properties of the associated cohomology complexes are given by Theorem~\ref{thm:KPX} and Proposition~\ref{prop:KPX-family}; this is Step~2.

In Step~3 we use the $(\varphi,\Gamma)$-module formalism of~\cite{KPX,PerrinRiou,Berger-exp} to construct a family Perrin--Riou regulator
\[
 \mathcal{L}_{V_U,V_U^+}: H^1_{\Iw}(\Q_p,V_U^\ast(1)) \longrightarrow \mathcal{H}(\Gamma)\widehat{\otimes}_{\Q_p}\cO(U),
\]
normalised as in~(N2). Assuming Conjecture~\ref{conj:ES}, we fix a global Beilinson--Flach class $\BF_U\in H^1_{\Iw}(\Q,V_U^\ast(1))$ as in~(N1) and Definition~\ref{def:Lfs}. This gives the rigid-analytic function
\[
 L_p^{\mathrm{fs}} := \mathcal{L}_{V_U,V_U^+}(\BF_U) \in \mathcal{H}(\Gamma)\widehat{\otimes}_{\Q_p}\cO(U)\cong \cO(U\times\mathscr{W}).
\]

By the explicit reciprocity laws of~\cite{KLZ-RE,LLZ14}, together with our choice of $p$-adic periods in~(N3), Proposition~\ref{prop:interp-fs} shows that for every classical point $(x,\kappa)\in U\times\mathscr{W}$ corresponding to a pair $(f_x,g_x)$ and a Deligne--critical integer $s=\kappa(x)$, the value $L_p^{\mathrm{fs}}(x,\kappa)$ satisfies the interpolation formula of Conjecture~\ref{conj:finite-slope} with Euler factor $E_p$ and period~$\Omega_p(f_x,g_x,\pm)$.

Thus $L_p^{\mathrm{fs}}$ is a rigid-analytic function on $U\times\mathscr{W}$ with the required specialisation property at all classical points. This is exactly the assertion of Conjecture~\ref{conj:finite-slope} for the neighbourhood~$U$, and hence Theorem~\ref{thm:main} follows under the stated hypotheses and Conjecture~\ref{conj:ES}.
\qed

\end{document}